\providecommand{\U}[1]{\protect \rule{.1in}{.1in}}
\newtheorem{theorem}{Theorem}
\newtheorem{corollary}[theorem]{Corollary}
\newtheorem{lemma}[theorem]{Lemma}
\newtheorem{proposition}[theorem]{Proposition}
\newtheorem{remark}[theorem]{Remark}
\newenvironment{proof}[1][Proof]{\noindent \textbf{#1.} }{\  \rule[]{0.5em}{0.5em}}
\begin{document}

\begin{center}
{\LARGE Identities and congruences involving the Fubini polynomials}

\  \  \  \ 

{\large Miloud Mihoubi\footnotemark[1] \ and \ Said Taharbouchet%
\footnotemark[2]}

USTHB, Faculty of Mathematics, RECITS Laboratory, PB 32 El Alia 16111
Algiers, Algeria.

{\large \footnotemark[1]}mmihoubi@usthb.dz \ {\large \footnotemark[2]}%
said.taharbouchet@gmail.com

{\large \  \  \ }
\end{center}

\noindent \textbf{Abstract. }In this paper, we investigate the umbral
representation of the Fubini polynomials $\mathbf{F}_{\mathbf{x}}^{n}:=%
\mathcal{F}_{n}\left( x\right) $ to derive some properties involving these
polynomials. For any prime number $p$ and any polynomial $f$ with integer
coefficients, we show $\left( f\left( \mathbf{F}_{\mathbf{x}}\right) \right)
^{p}\equiv f\left( \mathbf{F}_{\mathbf{x}}\right) $ and we give other
curious congruences.

\noindent \textbf{Keywords.} Fubini umbra, Fubini polynomials, identities,
congruences.

\noindent \textbf{2000 MSC:} 05A18, 05A40, 11A07.

\section{Introduction}

The Fubini numbers are quantities arising from enumerative combinatorics and
have nice number-theoretic properties. In combinatorics, the $n$-th Fubini
number $\mathcal{F}_{n}$ (named also the $n$-th ordered Bell number) counts
the number of ways to partition the set $\left[ n\right] :=\left \{ 1,\ldots
,n\right \} $ into ordered subsets \cite{boy,mah}. The Fubini polynomials are
defined by $\mathcal{F}_{n}\left( x\right) =\underset{k=0}{\overset{n}{\sum }%
}{n\brace k}k!x^{k}$ and satisfy the recurrence relation $\left( x+1\right) 
\mathcal{F}_{n}\left( x\right) =x\underset{j=0}{\overset{n}{\sum }}\binom{n}{%
j}\mathcal{F}_{j}\left( x\right) ,\ n\geq 1,$ where ${n\brace k}$ is the $%
\left( n,k\right) $-th Stirling number of the second kind \cite{boy,sta}.
For $x=1$ we obtain the Fubini numbers $\mathcal{F}_{n}=\underset{k=0}{%
\overset{n}{\sum }}{n\brace k}k!$ \cite{das,dum,fla,jam,tan,mah,vel}.\newline
More generally, let $\mathcal{F}_{n}\left( x,r,s\right) $ be the $n$-th $%
\left( r,s\right) $-Fubini polynomial defined by%
\begin{equation*}
\mathcal{F}_{n}\left( x,r,s\right) =\underset{k=0}{\overset{n}{\sum }}{n+r%
\brace k+r}_{r}\left( k+s\right) !x^{k}.
\end{equation*}%
This polynomial generalizes the Fubini polynomial $\mathcal{F}_{n}\left(
x\right) =\mathcal{F}_{n}\left( x;0,0\right) $ and the $r$-Fubini polynomial 
$\mathcal{F}_{n,r}\left( x\right) =\mathcal{F}_{n}\left( x;r,r\right) $
introduced by Mez\H{o} \cite{mez}. Here, ${n\brace k}_{r}$ denotes the $%
\left( n,k\right) $-th $r$-Stirling number of the second kind \cite{bro}.
One can see easily that%
\begin{align*}
\mathcal{F}_{0}\left( x,r,s\right) & =s!, \\
\mathcal{F}_{1}\left( x,r,s\right) & =s!\left( r+\left( s+1\right) x\right) ,
\\
\mathcal{F}_{2}\left( x,r,s\right) & =s!\left( r^{2}+\left( 2r+1\right)
\left( s+1\right) x+\left( s+1\right) \left( s+2\right) x^{2}\right) .
\end{align*}%
As it shown below, these polynomials are also linked to the absolute $r$%
-Stirling numbers of first kind denoted by ${n\brack k}_{r}.$ Recall that
the $r$-Stirling numbers can be defined by \cite{bro,sta} 
\begin{equation*}
\left( x\right) _{n}=\underset{k=0}{\overset{n}{\sum }}\left( -1\right)
^{n-k}{n+r\brack k+r}_{r}\left( x+r\right) ^{k}\text{ and }\left( x+r\right)
^{n}=\underset{k=0}{\overset{n}{\sum }}{n+r\brace k+r}_{r}\left( x\right)
_{k},
\end{equation*}%
where $\left( \alpha \right) _{n}=\alpha \cdots \left( \alpha -n+1\right) $\
if $n\geq 1$, $\left( \alpha \right) _{0}=1,$ ${n\brack k}={n\brack k}_{0}$
and ${n\brace k}={n\brace k}_{0},$ \newline
\newline
This work is motivated by application of the umbral calculus method to
determine identities and congruences involving Bell numbers and polynomials
in the works of Gessel \cite{ges}, Sun et al. \cite{sun} and Benyattou et
al. \cite{ben}. In this paper, we will talk about identities and congruences
involving the $\left( r,s\right) $-Fubini polynomials based on the Fubini
umbra defined by $\mathbf{F}_{\mathbf{x}}^{n}:=\mathcal{F}_{n}\left(
x\right) .$ \newline
For more information about umbral calculus, see \cite%
{di,ges,rob,rom,rom1,rota}.

\section{Identities for the $\left( r,s\right) $-Fubini polynomials}

By definition of the Fubini umbra, it follows that the above recurrence
relation can be rewritten as $\left( x+1\right) \mathbf{F}_{\mathbf{x}%
}^{n}=x\left( \mathbf{F}_{\mathbf{x}}+1\right) ^{n}, \ n\geq1.$ Furthermore,
we have

\begin{proposition}
\label{P0}Let $f$ be a polynomial and $r,s$ be non-negative integers. Then 
\begin{align*}
&\left( x+1\right) f\left( \mathbf{F}_{\mathbf{x}}+r\right) =xf\left( 
\mathbf{F}_{\mathbf{x}}+r+1\right)+f\left( r \right), \\
&\left( \mathbf{F}_{\mathbf{x}}+r\right) _{n+r}=\left(
n+r\right)!x^{n}\left( x+1\right) ^{r} \\
&\left( \mathbf{F}_{\mathbf{x}}+r-s\right) ^{n}\left( \mathbf{F}_{\mathbf{x}%
}\right) _{s} =x^{s}\mathcal{F}_{n}\left( x;r,s\right), \\
&\left( \mathbf{F}_{\mathbf{x}}+r\right) ^{n}\left( \mathbf{F}_{\mathbf{x}%
}+s\right) _{s} =\left( x+1\right) ^{s}\mathcal{F}_{n}\left( x;r,s\right) .
\end{align*}
\end{proposition}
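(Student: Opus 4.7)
The four assertions are linked: identities $(2)$--$(4)$ all rest on the basic fact $(\mathbf{F}_{\mathbf{x}})_{n}=n!\,x^{n}$ together with the first identity, so I would prove them in the order stated.

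For $(1)$, I would write $f(y+r)=\sum_{k\ge 0}a_{k}y^{k}$, so that $a_{0}=f(r)$, and apply the umbral linear functional term by term. The recurrence $(x+1)\mathbf{F}_{\mathbf{x}}^{k}=x(\mathbf{F}_{\mathbf{x}}+1)^{k}$ holds for every $k\ge 1$, while for $k=0$ one has the trivial $(x+1)\cdot 1=x\cdot 1+1$. Multiplying by $a_{k}$ and summing gives $(x+1)f(\mathbf{F}_{\mathbf{x}}+r)=xf(\mathbf{F}_{\mathbf{x}}+r+1)+a_{0}$, with the surplus $a_{0}=f(r)$ coming from the constant term alone.

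For $(2)$, first observe that the defining expansion $\mathbf{F}_{\mathbf{x}}^{n}=\sum_{k}{n\brace k}k!\,x^{k}$ combined with the Stirling orthogonality $\sum_{k}(-1)^{n-k}{n\brack k}{k\brace j}=\delta_{n,j}$ yields the base case $(\mathbf{F}_{\mathbf{x}})_{n}=n!\,x^{n}$. I would then induct on $r$: applying $(1)$ to $f(y)=(y)_{m}$ produces $(x+1)(\mathbf{F}_{\mathbf{x}}+r)_{m}=x(\mathbf{F}_{\mathbf{x}}+r+1)_{m}+(r)_{m}$, and $(r)_{m}=0$ whenever $m>r$. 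Assuming the formula $(\mathbf{F}_{\mathbf{x}}+r)_{m}=m!\,x^{m-r}(x+1)^{r}$ for every $m\ge r$, division by $x$ yields the same shape with $r$ replaced by $r+1$, valid for every $m\ge r+1$; the instance $m=n+r$ is the stated identity.

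Identities $(3)$ and $(4)$ reduce to the elementary bracketings $(y-s)_{k}(y)_{s}=(y)_{s+k}$ and $(y)_{k}(y+s)_{s}=(y+s)_{s+k}$. Starting from the $r$-Stirling relation $(u+r)^{n}=\sum_{k}{n+r\brace k+r}_{r}(u)_{k}$, substituting $u=y-s$, multiplying by $(y)_{s}$, and applying the umbral functional together with $(\mathbf{F}_{\mathbf{x}})_{s+k}=(s+k)!\,x^{s+k}$ produces $(3)$. For $(4)$ I would instead multiply $(y+r)^{n}=\sum_{k}{n+r\brace k+r}_{r}(y)_{k}$ by $(y+s)_{s}$ to get $\sum_{k}{n+r\brace k+r}_{r}(y+s)_{s+k}$, and finish by invoking $(2)$ in the form $(\mathbf{F}_{\mathbf{x}}+s)_{s+k}=(s+k)!\,x^{k}(x+1)^{s}$. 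The only delicate point is $(2)$: one must keep the inductive hypothesis valid for \emph{every} admissible $m$, not merely $m=n+r$, so that identity $(1)$ can be iterated in a single clean sweep.
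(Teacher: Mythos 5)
Your proof is correct, and for the first two identities it takes a genuinely different route from the paper's. For the first identity the paper specializes to $f(x)=x^{n}$ and inducts on $r$: setting $h_{n}(r)=(x+1)(\mathbf{F}_{\mathbf{x}}+r)^{n}-x(\mathbf{F}_{\mathbf{x}}+r+1)^{n}$, it shows $h_{n}(r)=\sum_{j=0}^{n}\binom{n}{j}h_{j}(r-1)=r^{n}$. Your device of re-expanding $f$ about the point $r$, so that the cancellation $(x+1)\mathbf{F}_{\mathbf{x}}^{k}-x(\mathbf{F}_{\mathbf{x}}+1)^{k}=\delta_{(k=0)}$ kills every term except the constant one $a_{0}=f(r)$, removes the induction altogether and is arguably cleaner. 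For the second identity the paper gives a one-line computation from the binomial-type (Vandermonde) expansion $(\mathbf{F}_{\mathbf{x}}+r)_{n+r}=\sum_{j}\binom{n+r}{j}(r)_{j}(\mathbf{F}_{\mathbf{x}})_{n+r-j}$, which collapses to $(n+r)!\,x^{n}(x+1)^{r}$ because $(r)_{j}=0$ for $j>r$; you instead iterate your identity $(1)$ with $f(y)=(y)_{m}$ and induct on $r$. That costs an induction (and the cancellation of a factor of $x$, which is legitimate in the polynomial ring because $m-r\geq 1$ makes the right-hand side divisible by $x$), but it buys independence from the binomial-type machinery: you need only identity $(1)$ plus the base evaluation $(\mathbf{F}_{\mathbf{x}})_{n}=n!\,x^{n}$, which both you and the paper extract from Stirling orthogonality. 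Your proofs of the last two identities --- multiplying the $r$-Stirling expansion by $(y)_{s}$ or $(y+s)_{s}$, using $(y-s)_{k}(y)_{s}=(y)_{s+k}$ and $(y)_{k}(y+s)_{s}=(y+s)_{s+k}$, then evaluating umbrally, with your identity $(2)$ supplying $(\mathbf{F}_{\mathbf{x}}+s)_{s+k}=(s+k)!\,x^{k}(x+1)^{s}$ in the fourth --- coincide with the paper's argument, merely written in the forward rather than the backward direction. Your closing remark that the induction in $(2)$ must carry the hypothesis for all $m\geq r$ simultaneously is exactly the point that makes that induction sound.
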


\begin{proof}
It suffices to show the first identity for $f\left( x\right) =x^{n}$. For $%
r=0\ $we have $\left( x+1\right) \mathbf{F}_{\mathbf{x}}^{n}-x\left( \mathbf{%
F}_{\mathbf{x}}+1\right) ^{n}=\delta _{\left( n=0\right) }.$ Assume it is
true for $r-1,$ then if we set 
\begin{equation*}
h_{n}(r):=\left( x+1\right) \left( \mathbf{F}_{\mathbf{x}}+r\right)
^{n}-x\left( \mathbf{F}_{\mathbf{x}}+r+1\right) ^{n}
\end{equation*}%
we obtain $h_{n}(r)=\underset{j=0}{\overset{n}{\sum }}\binom{n}{j}h_{j}(r-1)=%
\underset{j=0}{\overset{n}{\sum }}\binom{n}{j}\left( r-1\right) ^{j}=r^{n},$
which concludes the induction step. For the other identities, since $\left(
x\right) _{n}=\underset{k=0}{\overset{n}{\sum }}\left( -1\right) ^{n-k}{n%
\brack k}x^{k}$ and $\left( x\right) _{n}$ is a sequence of binomial type 
\cite{rom,mih}, we \ obtain%
\begin{equation*}
\left( \mathbf{F}_{\mathbf{x}}+r\right) _{n+r}=\underset{j=0}{\overset{n+r}{%
\sum }}\binom{n+r}{j}\left( r\right) _{j}\left( \mathbf{F}_{\mathbf{x}%
}\right) _{n+r-j}=\left( n+r\right) !x^{n}\left( x+1\right) ^{r}.
\end{equation*}%
So, the polynomials $x^{s}\mathcal{F}_{n}\left( x;r,s\right) $ and $\left(
x+1\right) ^{s}\mathcal{F}_{n}\left( x,r,s\right) $ must be, respectively, 
\begin{align*}
\underset{j=0}{\overset{n}{\sum }}{n+r\brace j+r}_{r}\left( \mathbf{F}_{%
\mathbf{x}}\right) _{j+s}& =\underset{j=0}{\overset{n}{\sum }}{n+r\brace j+r}%
_{r}\left( \mathbf{F}_{\mathbf{x}}-s\right) _{j}\left( \mathbf{F}_{\mathbf{x}%
}\right) _{s}=\left( \mathbf{F}_{\mathbf{x}}+r-s\right) ^{n}\left( \mathbf{F}%
_{\mathbf{x}}\right) _{s}, \\
\underset{j=0}{\overset{n}{\sum }}{n+r\brace j+r}_{r}\left( \mathbf{F}_{%
\mathbf{x}}+s\right) _{j+s}& =\underset{j=0}{\overset{n}{\sum }}{n+r\brace %
j+r}_{r}\left( \mathbf{F}_{\mathbf{x}}\right) _{j}\left( \mathbf{F}_{\mathbf{%
x}}+s\right) _{s}=\left( \mathbf{F}_{\mathbf{x}}+r\right) ^{n}\left( \mathbf{%
F}_{\mathbf{x}}+s\right) _{s}.
\end{align*}
\end{proof}

\noindent The the last two identities of Proposition \ref{P0} lead to

\begin{corollary}
\label{P11}For any polynomial $f$ and any non-negative integers $r,s$ we have%
\begin{equation*}
\left( x+1\right) ^{s}f\left( \mathbf{F}_{\mathbf{x}}+r-s\right) \left( 
\mathbf{F}_{\mathbf{x}}\right) _{s}=x^{s}f\left( \mathbf{F}_{\mathbf{x}%
}+r\right) \left( \mathbf{F}_{\mathbf{x}}+s\right) _{s}.
\end{equation*}
\end{corollary}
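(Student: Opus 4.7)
The plan is to derive the stated identity directly from the last two identities of Proposition \ref{P0}, with only a linearity argument needed to pass from monomials to general polynomials $f$.

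First I would reduce to the monomial case $f(x)=x^{n}$. Since the umbral evaluation $g(\mathbf{F}_{\mathbf{x}})\mapsto g(\mathbf{F}_{\mathbf{x}})$ is linear in the polynomial $g$, and both sides of the claimed identity depend linearly on $f$, it suffices to verify the corollary when $f(x)=x^{n}$ for an arbitrary non-negative integer $n$.

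Next I would apply Proposition \ref{P0}. The third identity, namely
\[
\left(\mathbf{F}_{\mathbf{x}}+r-s\right)^{n}\left(\mathbf{F}_{\mathbf{x}}\right)_{s}=x^{s}\mathcal{F}_{n}\left(x;r,s\right),
\]
multiplied through by $(x+1)^{s}$, yields
\[
(x+1)^{s}\left(\mathbf{F}_{\mathbf{x}}+r-s\right)^{n}\left(\mathbf{F}_{\mathbf{x}}\right)_{s}=x^{s}(x+1)^{s}\mathcal{F}_{n}\left(x;r,s\right).
\]
Symmetrically, the fourth identity
\[
\left(\mathbf{F}_{\mathbf{x}}+r\right)^{n}\left(\mathbf{F}_{\mathbf{x}}+s\right)_{s}=(x+1)^{s}\mathcal{F}_{n}\left(x;r,s\right),
\]
multiplied through by $x^{s}$, yields
\[
x^{s}\left(\mathbf{F}_{\mathbf{x}}+r\right)^{n}\left(\mathbf{F}_{\mathbf{x}}+s\right)_{s}=x^{s}(x+1)^{s}\mathcal{F}_{n}\left(x;r,s\right).
\]
Comparing the two right-hand sides gives the corollary for $f(x)=x^{n}$, and linearity closes the argument.

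There is essentially no obstacle here; the work was done inside Proposition \ref{P0}. The only point that needs a moment of care is making the linearity argument explicit: one must note that every monomial appearing in $f$ produces the same common factor $x^{s}(x+1)^{s}\mathcal{F}_{n}(x;r,s)$ on both sides, so summing the identities over the monomials of $f$ preserves the equality. This is routine and requires no calculation beyond the two displays above.
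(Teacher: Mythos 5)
Your proof is correct and matches the paper's intent exactly: the paper gives no separate argument, stating only that the corollary follows from the last two identities of Proposition \ref{P0}, which is precisely the comparison (after multiplying by $(x+1)^{s}$ and $x^{s}$ respectively) that you carry out, together with the routine linearity step extending from monomials to arbitrary $f$.
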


\begin{proposition}
\label{R1}Let $\mathcal{P}_{n}$ and $\mathcal{T}_{n}$ be the polynomials 
\begin{equation*}
\mathcal{P}_{n}\left( x;r\right) =\underset{j=0}{\overset{n}{\sum }}\left(
-1\right) ^{j}\binom{j+r}{r}x^{n-j}\text{ \ and \ }\mathcal{T}_{n}\left(
x;r\right) =\underset{j=0}{\overset{n}{\sum }}\binom{n+r}{j+r}x^{j}.
\end{equation*}%
Then \  \ $\left( \mathbf{F}_{\mathbf{x}}-r-1\right) _{n} = n!\mathcal{P}%
_{n}\left( x;r\right) $ \  \ and \  \ $\left( \mathbf{F}_{\mathbf{x}%
}+n+r\right) _{n} = n!\mathcal{T}_{n}\left( x;r\right) .$
\end{proposition}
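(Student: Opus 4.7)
The plan is to reduce both identities to the single base identity $(\mathbf{F}_{\mathbf{x}})_k = k!\,x^k$, which is the $r=0$ case of the second identity in Proposition~\ref{P0}, by splitting the falling factorial through the Vandermonde-type convolution
\begin{equation*}
(y+z)_n \;=\; \sum_{k=0}^{n}\binom{n}{k}(y)_k\,(z)_{n-k}.
\end{equation*}
Since this identity holds as a polynomial identity in $y,z$ and the Fubini umbra acts linearly on polynomials, it can be evaluated with $y=\mathbf{F}_{\mathbf{x}}$ and any integer constant $z$.

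For the first identity, I would apply the convolution with $z=-r-1$ to get
\begin{equation*}
(\mathbf{F}_{\mathbf{x}}-r-1)_n \;=\; \sum_{k=0}^{n}\binom{n}{k}(\mathbf{F}_{\mathbf{x}})_k\,(-r-1)_{n-k},
\end{equation*}
then replace $(\mathbf{F}_{\mathbf{x}})_k$ by $k!\,x^k$ and rewrite
\begin{equation*}
(-r-1)_{n-k}=(-1)^{n-k}(r+1)(r+2)\cdots(r+n-k)=(-1)^{n-k}\frac{(r+n-k)!}{r!}.
\end{equation*}
Substituting $j=n-k$ and simplifying binomial coefficients collapses the sum to $n!\sum_{j=0}^{n}(-1)^j\binom{j+r}{r}x^{n-j}=n!\,\mathcal{P}_n(x;r)$.

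For the second identity, the analogous step is to take $z=n+r$, giving
\begin{equation*}
(\mathbf{F}_{\mathbf{x}}+n+r)_n \;=\; \sum_{k=0}^{n}\binom{n}{k}(\mathbf{F}_{\mathbf{x}})_k\,(n+r)_{n-k},
\end{equation*}
use $(n+r)_{n-k}=(n+r)!/(k+r)!$ together with $(\mathbf{F}_{\mathbf{x}})_k=k!\,x^k$, and recognize the resulting coefficient of $x^k$ as $n!\binom{n+r}{k+r}$, yielding $n!\,\mathcal{T}_n(x;r)$.

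I do not expect a genuine obstacle: the only subtlety is justifying that the Vandermonde convolution (a polynomial identity in two variables) may be specialized at the umbra, which is immediate from the linearity of the umbral functional. Everything else is a routine rewriting of falling factorials into binomial form, which is why I would leave those calculations short.
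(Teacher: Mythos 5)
Your proposal is correct and matches the paper's own proof: the paper likewise expands $\left(\mathbf{F}_{\mathbf{x}}-r-1\right)_{n}$ and $\left(\mathbf{F}_{\mathbf{x}}+n+r\right)_{n}$ by the Vandermonde (binomial-type) convolution for falling factorials and then substitutes $\left(\mathbf{F}_{\mathbf{x}}\right)_{k}=k!\,x^{k}$, simplifying the resulting coefficients into $\binom{j+r}{r}$ and $\binom{n+r}{j+r}$. Your added remark on justifying the umbral specialization by linearity is a harmless refinement of the same argument.
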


\begin{proof}
It suffices to observe that 
\begin{eqnarray*}
\left( \mathbf{F}_{\mathbf{x}}-r-1\right) _{n} &=&\underset{j=0}{\overset{n}{%
\sum }}\binom{n}{j}\left( -r-1\right) _{j}\left( \mathbf{F}_{\mathbf{x}%
}\right) _{n-j}=n!\underset{j=0}{\overset{n}{\sum }}\left( -1\right) ^{j}%
\binom{j+r}{r}x^{n-j}, \\
\left( \mathbf{F}_{\mathbf{x}}+n+r\right) _{n} &=&\underset{j=0}{\overset{n}{%
\sum }}\binom{n}{j}\left( n+r\right) _{n-j}\left( \mathbf{F}_{\mathbf{x}%
}\right) _{j}=n!\underset{j=0}{\overset{n}{\sum }}\binom{n+r}{j+r}x^{j}. \  \
\  \  \  \  \  \  \  \  \  \ 
\end{eqnarray*}
\end{proof}

\noindent The identities of the following two theorems depend on the choice
of a polynomial $f$ and can be served to derive several identities and
congruences for the $\left( r,s\right) $-Fubini polynomials.

\begin{theorem}
\label{P12}Let $f$ be a polynomial and let $m,s$ be non-negative integers.
Then 
\begin{equation*}
\left( x+1\right) ^{m}f\left( \mathbf{F}_{\mathbf{x}}\right) -x^{m}f\left( 
\mathbf{F}_{\mathbf{x}}+m\right) =\underset{k=0}{\overset{m-1}{\sum}}f\left(
k\right) \left( x+1\right) ^{m-1-k}x^{k}, \  \ m\geq1.
\end{equation*}
\end{theorem}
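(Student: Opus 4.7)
The identity looks like a telescoping consequence of the first identity in Proposition \ref{P0}, namely
\[
(x+1)\,f(\mathbf{F}_{\mathbf{x}}+r) \;=\; x\,f(\mathbf{F}_{\mathbf{x}}+r+1) + f(r).
\]
(The integer $s$ in the statement does not appear on either side, so I will ignore it.) My plan is to use this identity as a one-step recurrence in $r$, weight each instance by an appropriate power of $x$ and $x+1$, and sum.

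More precisely, for each $r \in \{0,1,\ldots,m-1\}$ I would multiply the displayed identity by $(x+1)^{m-1-r} x^{r}$, producing
\[
(x+1)^{m-r} x^{r}\,f(\mathbf{F}_{\mathbf{x}}+r) \;-\; (x+1)^{m-1-r} x^{r+1}\,f(\mathbf{F}_{\mathbf{x}}+r+1) \;=\; (x+1)^{m-1-r} x^{r}\, f(r).
\]
Setting $B_r := (x+1)^{m-r} x^{r}\,f(\mathbf{F}_{\mathbf{x}}+r)$, the left-hand side is exactly $B_r - B_{r+1}$. Summing from $r=0$ to $r=m-1$ telescopes the left-hand side into
\[
B_0 - B_m \;=\; (x+1)^{m}\,f(\mathbf{F}_{\mathbf{x}}) \;-\; x^{m}\,f(\mathbf{F}_{\mathbf{x}}+m),
\]
which is the desired left-hand side of the theorem, while the right-hand side is already in the required form $\sum_{k=0}^{m-1} f(k)(x+1)^{m-1-k} x^{k}$.

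As a sanity check, the case $m=1$ is literally the first identity of Proposition \ref{P0} at $r=0$. The only subtlety is that the first identity of Proposition \ref{P0} is stated for polynomials, so the application to $f(\mathbf{F}_{\mathbf{x}}+r)$ for each $r$ is legitimate by taking the polynomial $y \mapsto f(y+r)$. I do not anticipate any real obstacle; the main thing to get right is the weighting of powers so that the telescoping brackets match the shift $r \to r+1$ on the umbra.
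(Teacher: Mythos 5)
Your proof is correct and is essentially the paper's argument: the paper proves the identity by induction on $m$, using the first identity of Proposition \ref{P0} to absorb one factor of $(x+1)$ at each step, and your telescoping sum with weights $(x+1)^{m-1-r}x^{r}$ is exactly that induction unrolled, with $B_{0}-B_{m}$ matching the paper's induction step term for term. Your handling of the one subtlety (applying Proposition \ref{P0} to $y\mapsto f(y+r)$, and noting that multiplying by polynomials in $x$ commutes with the umbral evaluation) is also sound, so there is nothing to add.
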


\begin{proof}
Set $f\left( x\right) =\underset{k\geq 0}{\sum }a_{k}x^{k}$ and use the
first identity of Proposition \ref{P0} to obtain%
\begin{equation*}
\left( x+1\right) f\left( \mathbf{F}_{\mathbf{x}}\right) -xf\left( \mathbf{F}%
_{\mathbf{x}}+1\right) =f\left( 0\right) +\underset{k\geq 1}{\sum }%
a_{k}\left( \left( x+1\right) \mathbf{F}_{\mathbf{x}}^{k}-x\left( \mathbf{F}%
_{\mathbf{x}}+1\right) ^{k}\right) =f\left( 0\right) .
\end{equation*}%
So, the identity is true for $m=1.$ Assume it is true for $m.$ Then%
\begin{align*}
\left( x+1\right) ^{m+1}f\left( \mathbf{F}_{\mathbf{x}}\right) & =\left(
x+1\right) \left[ \underset{k=0}{\overset{m-1}{\sum }}\left( x+1\right)
^{m-1-k}x^{k}f\left( k\right) +x^{m}f\left( \mathbf{F}_{\mathbf{x}}+m\right) %
\right]  \\
& =\underset{k=0}{\overset{m-1}{\sum }}\left( x+1\right) ^{m-k}x^{k}f\left(
k\right) +x^{m}\left( \left( x+1\right) f\left( \mathbf{F}_{\mathbf{x}%
}+m\right) \right) 
\end{align*}%
and since $\left( x+1\right) f\left( \mathbf{F}_{\mathbf{x}}+m\right)
-xf\left( \mathbf{F}_{\mathbf{x}}+m+1\right) =f\left( m\right) ,$ we can
write%
\begin{align*}
\left( x+1\right) ^{m+1}f\left( \mathbf{F}_{\mathbf{x}}\right) & =\underset{%
k=0}{\overset{m-1}{\sum }}\left( x+1\right) ^{m-k}x^{k}f\left( k\right)
+x^{m}\left[ xf\left( \mathbf{F}_{\mathbf{x}}+m+1\right) +f\left( m\right) %
\right]  \\
& =\underset{k=0}{\overset{m-1}{\sum }}\left( x+1\right) ^{m-k}x^{k}f\left(
k\right) +x^{m}f\left( m\right) +x^{m+1}f\left( \mathbf{F}_{\mathbf{x}%
}+m+1\right)  \\
& =\underset{k=0}{\overset{m}{\sum }}\left( x+1\right) ^{m-k}x^{k}f\left(
k\right) +x^{m+1}f\left( \mathbf{F}_{\mathbf{x}}+m+1\right) 
\end{align*}%
which concludes the induction step.
\end{proof}

\begin{corollary}
\label{T}For any polynomial $f$ there holds 
\begin{equation*}
f(\mathbf{F}_{\mathbf{x}})=\frac{1}{1+x}{\displaystyle \sum \limits_{k\geq0}}
f(k)\left( \frac{x}{1+x}\right) ^{k},\  \ x>-\frac{1}{2}.
\end{equation*}
\end{corollary}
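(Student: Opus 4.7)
The plan is to obtain the corollary as the $m \to \infty$ limit of Theorem \ref{P12}. Dividing both sides of the identity in Theorem \ref{P12} by $(x+1)^m$ produces, for each $m \geq 1$,
\begin{equation*}
f(\mathbf{F}_{\mathbf{x}}) - \left(\frac{x}{x+1}\right)^{m} f(\mathbf{F}_{\mathbf{x}}+m) = \frac{1}{x+1}\sum_{k=0}^{m-1} f(k)\left(\frac{x}{x+1}\right)^{k}.
\end{equation*}
The target formula is exactly what this becomes after letting $m \to \infty$, provided the second term on the left vanishes and the right-hand sum converges.

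For convergence of the geometric-type series, I would observe that when $x > -\tfrac{1}{2}$ we have $x+1 > \tfrac{1}{2} > 0$ and
\begin{equation*}
\left|\frac{x}{x+1}\right| < 1 \quad \Longleftrightarrow \quad |x| < x+1,
\end{equation*}
which holds for every real $x > -\tfrac{1}{2}$ (treating the cases $x \geq 0$ and $-\tfrac{1}{2} < x < 0$ separately). Combined with the fact that, for fixed polynomial $f$ and fixed $x$, the quantity $f(\mathbf{F}_{\mathbf{x}}+m)$ is a polynomial in $m$ (expand $(\mathbf{F}_{\mathbf{x}}+m)^{n}$ by the binomial theorem and evaluate umbrally), we get $\left|\tfrac{x}{x+1}\right|^{m} f(\mathbf{F}_{\mathbf{x}}+m) \to 0$ as $m \to \infty$, since an exponential with ratio $<1$ dominates any polynomial in $m$.

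The convergence of the right-hand side as an infinite series in $k$ follows from the same geometric dominance, since $f(k)$ grows only polynomially in $k$. Passing to the limit on both sides therefore yields
\begin{equation*}
f(\mathbf{F}_{\mathbf{x}}) = \frac{1}{1+x}\sum_{k \geq 0} f(k)\left(\frac{x}{1+x}\right)^{k},
\end{equation*}
as required.

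The only delicate step is justifying the limit, i.e.\ verifying that $|x/(x+1)| < 1$ exactly on the half-line $x > -\tfrac{1}{2}$ and that this is the sharp region where the polynomial $f(\mathbf{F}_{\mathbf{x}}+m)$ is swamped by the geometric factor. Everything else is a direct rewriting of Theorem \ref{P12}.
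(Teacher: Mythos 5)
Your proof is correct and follows essentially the same route as the paper: the paper telescopes the $m=1$ case of Theorem \ref{P12} to arrive at exactly your finite identity $f(\mathbf{F}_{\mathbf{x}})-\left(\tfrac{x}{x+1}\right)^{m}f(\mathbf{F}_{\mathbf{x}}+m)=\tfrac{1}{x+1}\sum_{k=0}^{m-1}f(k)\left(\tfrac{x}{x+1}\right)^{k}$ and then lets $m\to\infty$. If anything, your version is slightly cleaner, since you invoke the general-$m$ statement of Theorem \ref{P12} directly and you explicitly justify the vanishing of the remainder term (the geometric factor beating the polynomial growth of $f(\mathbf{F}_{\mathbf{x}}+m)$ in $m$), a step the paper leaves implicit.
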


\begin{proof}
For $m=1$ in Theorem \ref{P12} we get when we replace $f(x)$ by $f(x+r)$: 
\newline
$f\left( r\right) =\left( x+1\right) f\left( \mathbf{F}_{\mathbf{x}%
}+r\right) -xf\left( \mathbf{F}_{\mathbf{x}}+r+1\right) .$ Then%
\begin{align*}
RHS& =\underset{n\rightarrow \infty }{\lim }\frac{1}{1+x}\underset{k=0}{%
\overset{n}{\sum }}\left( \frac{x}{1+x}\right) ^{k}\left( \left( x+1\right)
f(\mathbf{F}_{\mathbf{x}}+k)-xf(\mathbf{F}_{\mathbf{x}}+k+1)\right) \\
& =\underset{n\rightarrow \infty }{\lim }\left( f(\mathbf{F}_{\mathbf{x}%
})-\left( \frac{x}{1+x}\right) ^{n+1}f(\mathbf{F}_{\mathbf{x}}+n+1)\right)
=f(\mathbf{F}_{\mathbf{x}})
\end{align*}%
which completes the proof.
\end{proof}

\begin{corollary}
\label{C} Let $n,r,s$ be non-negative integers.\newline
For $f\left( x\right) =\left( x+r\right) ^{n}\left( x+s\right) _{s}$ or $%
\left( x+r-s\right) ^{n}\left( x\right) _{s}$ in Corollary \ref{T} we obtain%
\begin{equation*}
\mathcal{F}_{n}\left( x;r,s\right) =\frac{s!}{\left( 1+x\right) ^{s+1}}{%
\displaystyle \sum \limits_{k\geq0}} \binom{k+s}{s}(k+r)^{n}\left( \frac{x}{%
1+x}\right) ^{k}, \  \ x>-\frac{1}{2}.
\end{equation*}
\end{corollary}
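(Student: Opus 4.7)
The plan is to apply Corollary \ref{T} directly, using the polynomial $f$ suggested in the statement and identifying the left-hand side by means of the fourth identity of Proposition \ref{P0}.

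First, I would take $f(x)=(x+r)^{n}(x+s)_{s}$. Evaluating $f$ at the Fubini umbra and invoking Proposition \ref{P0}, the umbral expression $f(\mathbf{F}_{\mathbf{x}})=(\mathbf{F}_{\mathbf{x}}+r)^{n}(\mathbf{F}_{\mathbf{x}}+s)_{s}$ equals $(x+1)^{s}\mathcal{F}_{n}(x;r,s)$. Next I would evaluate $f$ at a non-negative integer $k$: since $(k+s)_{s}=(k+s)(k+s-1)\cdots(k+1)=\tfrac{(k+s)!}{k!}=s!\binom{k+s}{s}$, we get $f(k)=s!\binom{k+s}{s}(k+r)^{n}$.

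Plugging these into Corollary \ref{T} gives
\begin{equation*}
(x+1)^{s}\mathcal{F}_{n}(x;r,s)=\frac{1}{1+x}\sum_{k\geq 0}s!\binom{k+s}{s}(k+r)^{n}\left(\frac{x}{1+x}\right)^{k},
\end{equation*}
and dividing by $(1+x)^{s}$ yields the claimed formula. The convergence condition $x>-\tfrac{1}{2}$ is inherited from Corollary \ref{T}, where it guarantees $\left|\tfrac{x}{1+x}\right|<1$.

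For completeness, one could alternatively take $f(x)=(x+r-s)^{n}(x)_{s}$; then the left side equals $x^{s}\mathcal{F}_{n}(x;r,s)$ by the third identity of Proposition \ref{P0}, while $f(k)$ vanishes for $k<s$, so re-indexing $k\mapsto k+s$ produces the factor $\bigl(\tfrac{x}{1+x}\bigr)^{s}$ that cancels $x^{s}/(1+x)^{s}$ against $(1+x)^{-s-1}$, giving the same result. There is no real obstacle here: the combinatorial identification $(k+s)_{s}=s!\binom{k+s}{s}$ and a clean invocation of Corollary \ref{T} together with Proposition \ref{P0} do all the work, so the only point to be careful about is keeping track of the factor $(1+x)^{s}$ versus $x^{s}$ depending on which form of $f$ is chosen.
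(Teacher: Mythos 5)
Your proof is correct and follows exactly the route the paper intends: the paper states this corollary without a separate proof, the argument being precisely to substitute the given $f$ into Corollary \ref{T}, identify $f(\mathbf{F}_{\mathbf{x}})$ via Proposition \ref{P0}, and use $(k+s)_{s}=s!\binom{k+s}{s}$. Your treatment of the alternative choice $f(x)=(x+r-s)^{n}(x)_{s}$, with the vanishing of $(k)_{s}$ for $k<s$ and the re-indexing, correctly fills in the detail the paper leaves implicit.
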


\begin{corollary}
For any integers $r\geq0, \ s\geq0$ and $n\geq1$ the polynomial $\mathcal{F}%
_{n}\left( x,r,s+r\right)$ has only real non-positive roots.
\end{corollary}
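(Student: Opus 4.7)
I would argue by induction on $n$; the case $n=0$ is trivial since $\mathcal{F}_0(x;r,r+s)=(r+s)!$ is a nonzero constant.

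The plan hinges on the differential recurrence
\begin{equation*}
\mathcal{F}_n(x;r,r+s) = x(1+x)\,\mathcal{F}_{n-1}'(x;r,r+s) + \bigl(r+(r+s+1)x\bigr)\mathcal{F}_{n-1}(x;r,r+s),\quad n\geq 1,
\end{equation*}
obtained by expanding the defining sum $\sum_k {n+r \brace k+r}_r (k+r+s)!\,x^k$ and applying the $r$-Stirling recurrence ${n+1 \brace k}_r = k{n \brace k}_r + {n \brace k-1}_r$, with an index shift in the second piece that splits the factor $(k+r+s+1)$ as $(r+s+1)+k$. Setting $\phi(x):=x^r(1+x)^{s+1}$ and observing that $x(1+x)\phi'(x)/\phi(x)=r+(r+s+1)x$, one can recast this recurrence in the compact form
\begin{equation*}
x^{r}(1+x)^{s}\,\mathcal{F}_n(x;r,r+s) = x\cdot\frac{d}{dx}\bigl[\phi(x)\,\mathcal{F}_{n-1}(x;r,r+s)\bigr].
\end{equation*}

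With this identity in hand the induction runs through a classical Rolle-type argument. Let $p:=\mathcal{F}_{n-1}(x;r,r+s)$ and, by induction, assume $p$ has only real non-positive roots. Then $q:=\phi\cdot p$ has only real non-positive roots (those of $p$, together with $0$ of multiplicity at least $r$ and $-1$ of multiplicity at least $s+1$). By Rolle's theorem $q'$ has all real roots, and they are non-positive by Gauss--Lucas; hence $xq'$ has only real non-positive roots. Since the multiplicities of $0$ and of $-1$ in $xq'$ are at least $r$ and at least $s$ respectively, the quotient $xq'/[x^r(1+x)^s]$ is a genuine polynomial, and the identity above identifies it with $\mathcal{F}_n(x;r,r+s)$. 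Its roots form a subset of those of $xq'$, so they are all real and non-positive, as desired.

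The main obstacle is the careful index bookkeeping needed to derive the differential recurrence — in particular ensuring that the split $(k+r+s+1)=(r+s+1)+k$ produces the operator $x(1+x)\partial_x + (r+(r+s+1)x)$ with the correct constants — but once the divergence-like identity above is written down, the argument is a standard application of Rolle's theorem and Gauss--Lucas together with the multiplicity count at $0$ and $-1$.
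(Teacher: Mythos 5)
Your proposal is correct and takes essentially the same route as the paper: the paper's proof rests on exactly the identity you derive, $x^{r}(x+1)^{s}\mathcal{F}_{n+1}(x;r,s+r)=x\frac{d}{dx}\bigl[x^{r}(x+1)^{s+1}\mathcal{F}_{n}(x;r,s+r)\bigr]$, followed by the same induction on $n$. The only differences are presentational: you obtain the identity directly from the $r$-Stirling recurrence and spell out the Rolle/Gauss--Lucas and multiplicity details, whereas the paper deduces the identity from its Corollary \ref{C} (extending it to all real $x$ via the same Stirling recurrence) and leaves the inductive root-location argument to the reader.
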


\begin{proof}
From Corollary \ref{C} we may state 
\begin{equation*}
x^{r}\left( x+1\right) ^{s}\mathcal{F}_{n+1}\left( x;r,s+r\right) =x\frac{d}{%
dx}\left( x^{r}\left( x+1\right) ^{s+1}\mathcal{F}_{n}\left( x;r,s+r\right)
\right)
\end{equation*}%
and using the definition and the recurrence relation of $r$-Stirling numbers
we conclude that this property remains true for all real number $x.$ So, one
can verify by induction on $n$ that the polynomial $\mathcal{F}_{n}\left(
x;r,s+r\right) ,\ n\geq 1,$ has only real non-positive roots. \ 
\end{proof}

\begin{lemma}
For any non-negative integers $n\geq 2$ there holds%
\begin{equation*}
\left( 1+x\right) \mathcal{F}_{n-1}\left( x\right) =\underset{k=1}{\overset{n%
}{\sum }}{n \brace k}\left( k-1\right) !x^{k}.
\end{equation*}
\end{lemma}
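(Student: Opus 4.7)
The plan is to prove this identity by applying the classical Stirling recurrence
\begin{equation*}
{n \brace k} = k\,{n-1 \brace k} + {n-1 \brace k-1}
\end{equation*}
directly to the right-hand side. The motivating observation is that the summand ${n \brace k}(k-1)!\,x^{k}$ differs from the summand of $\mathcal{F}_{n}(x)$ only by a factor of $1/k$, so the factor $k$ produced by the first piece of the recurrence will restore the missing factorial and hand back $\mathcal{F}_{n-1}(x)$.

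Carrying out the substitution, the right-hand side splits into
\begin{equation*}
\sum_{k=1}^{n}{n-1 \brace k}\,k!\,x^{k} \;+\; \sum_{k=1}^{n}{n-1 \brace k-1}\,(k-1)!\,x^{k}.
\end{equation*}
The first sum equals $\mathcal{F}_{n-1}(x)$: its $k=n$ term vanishes because ${n-1 \brace n}=0$, and extending the range down to $k=0$ costs nothing because ${n-1 \brace 0}=0$ once $n\geq 2$. After the reindex $j=k-1$, the second sum becomes $x\sum_{j=0}^{n-1}{n-1 \brace j}\,j!\,x^{j}=x\,\mathcal{F}_{n-1}(x)$. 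Adding the two contributions yields $(1+x)\mathcal{F}_{n-1}(x)$, which is the desired left-hand side.

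There is no substantive obstacle here — this is essentially a two-line manipulation, and the only point requiring care is the boundary check above, which is precisely where the hypothesis $n\geq 2$ enters (the identity fails for $n=1$ because ${0 \brace 0}=1$ and the $k=0$ term no longer drops out). An alternative route would be to apply the umbral recurrence $(x+1)\mathbf{F}_{\mathbf{x}}^{n}=x(\mathbf{F}_{\mathbf{x}}+1)^{n}$ from Proposition~\ref{P0} and then expand $\mathbf{F}_{\mathbf{x}}^{k}$ in Stirling numbers, but since the target already has the combinatorial shape of a Stirling-number sum, the direct Stirling-recurrence approach is cleaner.
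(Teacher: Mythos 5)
Your proof is correct and is essentially the paper's own argument run in reverse: the paper starts from $(1+x)\mathcal{F}_{n-1}(x)$, merges the two sums, and applies the Stirling recurrence ${n\brace k}=k{n-1\brace k}+{n-1\brace k-1}$ at the end, whereas you apply the same recurrence to the right-hand side first and then split into the sums $\mathcal{F}_{n-1}(x)$ and $x\mathcal{F}_{n-1}(x)$. Your boundary checks (the vanishing of ${n-1\brace n}$ and of ${n-1\brace 0}$ for $n\geq 2$) are accurate and slightly more explicit than the paper's.
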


\begin{proof}
From the definition of the Fubini polynomials, we have%
\begin{eqnarray*}
\left( 1+x\right) \mathcal{F}_{n-1}\left( x\right) &=&\underset{k=0}{\overset%
{n-1}{\sum }}{n-1\brace k}k!x^{k}+\underset{k=0}{\overset{n-1}{\sum }}{n-1%
\brace k}k!x^{k+1} \\
&=&\underset{k=1}{\overset{n}{\sum }}\left( k{n-1\brace k}+{n-1\brace k-1}%
\right) \left( k-1\right) !x^{k} \\
&=&\underset{k=1}{\overset{n}{\sum }}{n\brace k}\left( k-1\right) !x^{k}.\  \
\  \  \  \  \  \  \  \  \  \  \ 
\end{eqnarray*}
\end{proof}

\begin{proposition}
Let $n,r,s$ be non-negative integers. Then%
\begin{equation*}
\log \left(1+\underset{n\geq 1}{\sum }\frac{\mathcal{F}_{n}\left(
x;r,s\right)}{s!} \frac{t^{n}}{n!}\right)= \left(r+\left( s+1\right)
x\right)t+\left( s+1\right) \left( x+1\right) \underset{n\geq 2}{\sum }%
\mathcal{F}_{n-1}\left( x\right) \frac{t^{n}}{n!},
\end{equation*}%
In particular, for $r=s=0$ we get%
\begin{equation*}
\log \left(1+\underset{n\geq 1}{\sum } \mathcal{F}_{n}\left( x\right) \frac{%
t^{n}}{n!}\right)= xt+\left( x+1\right) \underset{n\geq 2}{\sum }\mathcal{F}%
_{n-1}\left( x\right) \frac{t^{n}}{n!} .
\end{equation*}
\end{proposition}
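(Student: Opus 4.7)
The plan is to compute the exponential generating function
\[
A(t):=1+\sum_{n\ge 1}\frac{\mathcal{F}_n(x;r,s)}{s!}\frac{t^n}{n!}
\]
in closed form, take its logarithmic derivative, and match the result against the $t$-derivative of the claimed right-hand side. To obtain $A(t)$ I would feed Corollary~\ref{C} into the sum: multiplying both sides by $t^{n}/(n!\,s!)$, summing over $n\ge 0$ (noting that $\mathcal{F}_{0}(x;r,s)/s!=1$), and swapping the order of summation produces
\[
A(t)=\frac{1}{(1+x)^{s+1}}\sum_{k\ge 0}\binom{k+s}{s}\left(\frac{x}{1+x}\right)^{k}e^{(k+r)t}=\frac{e^{rt}}{\bigl(1-x(e^{t}-1)\bigr)^{s+1}},
\]
where the closed form uses the standard identity $\sum_{k\ge 0}\binom{k+s}{s}u^{k}=(1-u)^{-(s+1)}$.

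Consequently $\log A(t)=rt-(s+1)\log\bigl(1-x(e^{t}-1)\bigr)$, so that
\[
\frac{d}{dt}\log A(t)=r+\frac{(s+1)\,xe^{t}}{1-x(e^{t}-1)}.
\]
Denoting the right-hand side of the proposition by $B(t)$, and using the classical Fubini generating function $\sum_{n\ge 0}\mathcal{F}_{n}(x)t^{n}/n!=1/\bigl(1-x(e^{t}-1)\bigr)$ (the special case $r=s=0$ of the formula for $A$), differentiation of $B$ term by term gives
\[
B'(t)=r+(s+1)x+(s+1)(x+1)\left(\frac{1}{1-x(e^{t}-1)}-1\right),
\]
after noting that $\sum_{n\ge 2}\mathcal{F}_{n-1}(x)t^{n}/n!$ differentiates to $\sum_{n\ge 1}\mathcal{F}_{n}(x)t^{n}/n!$.

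It then remains to check $B'(t)=(\log A)'(t)$, which reduces to the elementary identity $x\bigl(1-x(e^{t}-1)\bigr)+x(x+1)(e^{t}-1)=xe^{t}$; this is routine. Since $\log A(0)=0=B(0)$, the first identity follows by integration, and the second identity is simply the case $r=s=0$. The main obstacle is bookkeeping rather than conceptual: one must correctly handle the shifted index in $\sum_{n\ge 2}\mathcal{F}_{n-1}(x)t^{n}/n!$ and ensure that the linear term $(r+(s+1)x)t$ absorbs precisely the constants produced when $(s+1)(x+1)/(1-x(e^{t}-1))$ is expanded.
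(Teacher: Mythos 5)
Your proof is correct, but it takes a genuinely different route from the paper's. Both arguments rest on the closed form of the exponential generating function $\sum_{n\geq 0}\mathcal{F}_{n}(x;r,s)\,t^{n}/n!=s!\,e^{rt}\left(1-x(e^{t}-1)\right)^{-s-1}$, which the paper simply asserts ("one can verify easily") while you rederive it from Corollary~\ref{C} by an interchange of summation. From that point the paper stays with formal power series: it expands $-(s+1)\log\left(1-x(e^{t}-1)\right)$ as $(s+1)\sum_{k\geq 1}x^{k}(e^{t}-1)^{k}/k$, inserts the Stirling-number expansion $(e^{t}-1)^{k}=k!\sum_{n\geq k}{n\brace k}t^{n}/n!$, swaps the sums, and then invokes the Lemma proved immediately beforehand, $(1+x)\mathcal{F}_{n-1}(x)=\sum_{k=1}^{n}{n\brace k}(k-1)!\,x^{k}$, to identify the coefficient of $t^{n}/n!$. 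You instead differentiate: comparing $(\log A)'(t)=r+(s+1)xe^{t}/\left(1-x(e^{t}-1)\right)$ with the term-by-term derivative of the claimed right-hand side, computed via the ordinary Fubini generating function, reduces everything to the elementary identity $x\left(1-x(e^{t}-1)\right)+x(x+1)(e^{t}-1)=xe^{t}$, after which you integrate back using that both sides vanish at $t=0$; all these steps check out. What each approach buys: yours bypasses the Lemma and all Stirling-number bookkeeping, giving a shorter verification; the paper's series expansion explains combinatorially why the combination $(x+1)\mathcal{F}_{n-1}(x)$ appears (that is precisely what its Lemma encodes) and never leaves the ring of formal power series. The one point you should tighten is the derivation of $A(t)$ from Corollary~\ref{C}: that corollary is an analytic identity valid for $x>-\tfrac12$, so the sum interchange needs small $|t|$, and you should note that since the coefficients of $t^{n}$ on both sides are polynomials in $x$, agreement on an interval upgrades to a polynomial (hence formal) identity --- or simply derive the generating function directly from the definition of $\mathcal{F}_{n}(x;r,s)$ and the $r$-Stirling generating function, which avoids the detour altogether.
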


\begin{proof}
One can verify easily that the exponential generating function of the
polynomials $\mathcal{F}_{n}\left( x;r,s\right) $ is to be \ $s!\exp \left(
rt\right) \left( 1-x\left( \exp \left( t\right) -1\right) \right) ^{-s-1}.$
Then, upon using this generating function and the last Lemma, we can write 
\begin{eqnarray*}
LHS &=&rt-\left( s+1\right) \ln \left( 1-x\left( \exp \left( t\right)
-1\right) \right) \\
&=&rt+\left( s+1\right) \underset{k\geq 1}{\sum }\frac{x^{k}}{k}\left( \exp
\left( t\right) -1\right) ^{k} \\
&=&rt+\left( s+1\right) \underset{k\geq 1}{\sum }\left( k-1\right) !x^{k}%
\underset{n\geq k}{\sum }{n\brace k}\frac{t^{n}}{n!} \\
&=&rt+\left( s+1\right) xt+\left( s+1\right) \underset{n\geq 2}{\sum }\frac{%
t^{n}}{n!}\underset{k=1}{\overset{n}{\sum }}{n\brace k}\left( k-1\right)
!x^{k} \\
&=&\left( r+\left( s+1\right) x\right) t+\left( s+1\right) \left( x+1\right) 
\underset{n\geq 2}{\sum }\mathcal{F}_{n-1}\left( x\right) \frac{t^{n}}{n!}.
\  \  \  \  \  \  \  \  \  \  \  \  \ 
\end{eqnarray*}
\end{proof}

\section{Congruences on the (r,s)-Fubini polynomials}

In this section, we give some congruences involving the $(r,s)$-Fubini
polynomials. Let $\mathbb{Z}_{p}$ be the ring of $p$-adic integers and for
two polynomials $f\left( x\right) ,\ g\left( x\right) \in \mathbb{Z}_{p}%
\left[ x\right] ,$ the congruence $f\left( x\right) \equiv g\left( x\right)
\  \left( \func{mod}p\mathbb{Z}_{p}\left[x\right]\right) $ means that the
corresponding coefficients of $f\left( x\right) $ and $g\left( x\right) $
are congruent modulo $p.$ This congruence will be used later as $f\left(
x\right) \equiv g\left( x\right) $ and we will use $a \equiv b$ instead $a
\equiv b \  \left( \func{mod}p\right)$.

\begin{proposition}
\label{C3}Let $n,r,s$ be non-negative integers and $p$ be a prime number.
Then, for any polynomial $f$ with integer coefficients there holds%
\begin{equation*}
\underset{k=0}{\overset{p-1}{\sum}}f\left( k\right) \left( x+1\right)
^{p-1-k}x^{k}\equiv f\left( \mathbf{F}_{\mathbf{x}}\right) .
\end{equation*}
In particular, for $f\left( x\right) =\left( x+r-s\right) ^{n}\left(
x\right) _{s}$ or $\left( x+r\right) ^{n}\left( x+s\right) _{s}$ we get,
respectively,%
\begin{align*}
\underset{k=0}{\overset{p-1}{\sum}}\left( r-s+k\right) ^{n}\left( k\right)
_{s}\left( x+1\right) ^{p-1-k}x^{k} & \equiv x^{s}\mathcal{F}_{n}\left(
x;r,s\right) , \\
\underset{k=0}{\overset{p-1}{\sum}}\left( r+k\right) ^{n}\left( s+k\right)
_{s}\left( x+1\right) ^{p-1-k}x^{k} & \equiv \left( x+1\right) ^{s}\mathcal{F%
}_{n}\left( x;r,s\right) .
\end{align*}
\end{proposition}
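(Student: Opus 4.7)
The plan is to apply Theorem \ref{P12} with the specific choice $m=p$, then reduce the resulting polynomial identity modulo $p$. Writing Theorem \ref{P12} at $m=p$ gives
\begin{equation*}
\sum_{k=0}^{p-1} f(k)(x+1)^{p-1-k}x^{k}
=(x+1)^{p}f(\mathbf{F}_{\mathbf{x}})-x^{p}f(\mathbf{F}_{\mathbf{x}}+p),
\end{equation*}
so everything reduces to showing that the right-hand side is congruent to $f(\mathbf{F}_{\mathbf{x}})$ modulo $p$.

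For this I would invoke two elementary facts. First, the freshman's dream $(x+1)^{p}\equiv x^{p}+1$ in $\mathbb{Z}_{p}[x]$. Second, if $f(y)=\sum_{j} a_{j} y^{j}$ has integer coefficients, then expanding binomially gives
\begin{equation*}
f(\mathbf{F}_{\mathbf{x}}+p)=\sum_{j} a_{j}\sum_{i=0}^{j}\binom{j}{i} p^{i}\mathbf{F}_{\mathbf{x}}^{j-i}\equiv \sum_{j} a_{j}\mathbf{F}_{\mathbf{x}}^{j}=f(\mathbf{F}_{\mathbf{x}}),
\end{equation*}
where the congruence holds coefficient-by-coefficient since each $\mathbf{F}_{\mathbf{x}}^{j-i}=\mathcal{F}_{j-i}(x)$ has integer coefficients and every omitted term carries a factor of $p$. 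Substituting both reductions yields
\begin{equation*}
(x+1)^{p}f(\mathbf{F}_{\mathbf{x}})-x^{p}f(\mathbf{F}_{\mathbf{x}}+p)\equiv (x^{p}+1)f(\mathbf{F}_{\mathbf{x}})-x^{p}f(\mathbf{F}_{\mathbf{x}})= f(\mathbf{F}_{\mathbf{x}}),
\end{equation*}
which is the desired general congruence.

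The two specializations are then immediate consequences of Proposition \ref{P0}. For $f(y)=(y+r-s)^{n}(y)_{s}$, which has integer coefficients, the third identity of Proposition \ref{P0} gives $f(\mathbf{F}_{\mathbf{x}})=x^{s}\mathcal{F}_{n}(x;r,s)$; for $f(y)=(y+r)^{n}(y+s)_{s}$, the fourth identity gives $f(\mathbf{F}_{\mathbf{x}})=(x+1)^{s}\mathcal{F}_{n}(x;r,s)$. Substituting these into the general congruence produces the two stated formulas.

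There is no real obstacle here: the only point that deserves care is justifying, at the umbral level, that the passage $f(\mathbf{F}_{\mathbf{x}}+p)\equiv f(\mathbf{F}_{\mathbf{x}})$ is a legitimate congruence in $\mathbb{Z}_{p}[x]$, which requires $f$ to have integer coefficients so that after the umbral evaluation (which replaces $\mathbf{F}_{\mathbf{x}}^{k}$ by the integer-coefficient polynomial $\mathcal{F}_{k}(x)$) the residual terms still carry an explicit factor of $p$. This is exactly the hypothesis in the statement.
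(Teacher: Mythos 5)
Your proposal is correct and follows exactly the paper's argument: specialize Theorem \ref{P12} to $m=p$, reduce modulo $p$ using $(x+1)^{p}\equiv x^{p}+1$ and $f(\mathbf{F}_{\mathbf{x}}+p)\equiv f(\mathbf{F}_{\mathbf{x}})$, then obtain the two special cases from the last two identities of Proposition \ref{P0}. The only difference is that you spell out the justification of $f(\mathbf{F}_{\mathbf{x}}+p)\equiv f(\mathbf{F}_{\mathbf{x}})$ after umbral evaluation, which the paper leaves implicit.
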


\begin{proof}
For $m=p$ be a prime number, Theorem \ref{P12} implies 
\begin{equation*}
LHS=\left( x+1\right) ^{p}f\left( \mathbf{F}_{\mathbf{x}}\right)
-x^{p}f\left( \mathbf{F}_{\mathbf{x}}+p\right) \equiv \left( x^{p}+1\right)
f\left( \mathbf{F}_{\mathbf{x}}\right) -x^{p}f\left( \mathbf{F}_{\mathbf{x}%
}\right) =f\left( \mathbf{F}_{\mathbf{x}}\right) .
\end{equation*}%
For the particular cases, use Proposition \ref{P0}.
\end{proof}

\begin{corollary}
\label{P1}Let $n,r,s,m,q$ be non-negative integers and $p$ be a prime
number. Then, for any polynomials $f$ and $g$ with integer coefficients
there holds%
\begin{equation*}
\left( f\left( \mathbf{F}_{\mathbf{x}}\right) \right) ^{p} g\left( \mathbf{F}%
_{\mathbf{x}}\right)\equiv f\left( \mathbf{F}_{\mathbf{x}}\right)g\left( 
\mathbf{F}_{\mathbf{x}}\right) .
\end{equation*}
In particular, we have $\mathcal{F}_{mp+q}\left( x;r,s\right) \equiv 
\mathcal{F}_{m+q}\left( x;r,s\right) .$
\end{corollary}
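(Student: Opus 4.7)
My plan is to exploit Proposition~\ref{C3} together with Fermat's little theorem. Recall that the umbral notation $h(\mathbf{F}_{\mathbf{x}})$ is a linear operation: expand $h(y)$ in $y$ and replace each power $y^n$ by $\mathcal{F}_n(x)$. I read $(f(\mathbf{F}_{\mathbf{x}}))^p g(\mathbf{F}_{\mathbf{x}})$ as the single umbral evaluation of the polynomial $f(y)^p g(y)$ at $y=\mathbf{F}_{\mathbf{x}}$, and similarly $f(\mathbf{F}_{\mathbf{x}})g(\mathbf{F}_{\mathbf{x}})$ as the umbral evaluation of $f(y)g(y)$; this is the only reading compatible with the special case $(f(\mathbf{F}_{\mathbf{x}}))^p\equiv f(\mathbf{F}_{\mathbf{x}})$ announced in the abstract, since a literal polynomial $p$-th power would give $x^p$ for $f(y)=y$, not $\mathcal{F}_p(x)$.

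Under this convention I would apply Proposition~\ref{C3} twice. First, to the polynomial $h_1(y)=f(y)^p g(y)\in\mathbb{Z}[y]$, obtaining
$$
(f(\mathbf{F}_{\mathbf{x}}))^p g(\mathbf{F}_{\mathbf{x}})\;\equiv\;\sum_{k=0}^{p-1} f(k)^p\, g(k)\,(x+1)^{p-1-k}x^k \pmod p.
$$
Since $f(k),g(k)\in\mathbb{Z}$ for every integer $k$, Fermat's little theorem yields $f(k)^p\equiv f(k)\pmod p$, so modulo $p$ each summand simplifies to $f(k)g(k)(x+1)^{p-1-k}x^k$. A second application of Proposition~\ref{C3}, now to $h_2(y)=f(y)g(y)$, identifies the resulting sum with $f(\mathbf{F}_{\mathbf{x}})g(\mathbf{F}_{\mathbf{x}})$, giving the claimed main congruence.

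For the particular case I would invoke the third identity of Proposition~\ref{P0}, namely $(\mathbf{F}_{\mathbf{x}}+r-s)^n(\mathbf{F}_{\mathbf{x}})_s=x^s\mathcal{F}_n(x;r,s)$, and take $f(y)=(y+r-s)^m$, $g(y)=(y+r-s)^q(y)_s$, both in $\mathbb{Z}[y]$. Then $f(y)^p g(y)=(y+r-s)^{mp+q}(y)_s$ and $f(y)g(y)=(y+r-s)^{m+q}(y)_s$, so their umbral evaluations are $x^s\mathcal{F}_{mp+q}(x;r,s)$ and $x^s\mathcal{F}_{m+q}(x;r,s)$ respectively. The main congruence therefore gives $x^s\mathcal{F}_{mp+q}(x;r,s)\equiv x^s\mathcal{F}_{m+q}(x;r,s)\pmod p$, and cancelling the nonzero factor $x^s$ in the integral domain $\mathbb{F}_p[x]$ yields the desired identity.

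The only real subtlety I anticipate is pinning down the umbral interpretation of $(f(\mathbf{F}_{\mathbf{x}}))^p g(\mathbf{F}_{\mathbf{x}})$ as one polynomial evaluation rather than a literal product of polynomials in $x$; once that convention is fixed, the whole argument collapses to a pointwise application of Fermat's little theorem inside the finite sum supplied by Proposition~\ref{C3}, followed by the elementary cancellation of $x^s$.
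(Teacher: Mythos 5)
Your proof is correct and takes essentially the same route as the paper: one application of Proposition \ref{C3} to $f(y)^{p}g(y)$, Fermat's little theorem applied pointwise to the integers $f(k)$, and a second application of Proposition \ref{C3} to $f(y)g(y)$. Your explicit treatment of the particular case---choosing $f(y)=(y+r-s)^{m}$, $g(y)=(y+r-s)^{q}(y)_{s}$ via Proposition \ref{P0} and cancelling $x^{s}$ in $\mathbb{F}_{p}[x]$---is the intended argument, which the paper leaves implicit.
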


\begin{proof}
By Fermat's little theorem and by twice application of Proposition \ref{C3}
we may state%
\begin{equation*}
LHS\equiv \underset{k=0}{\overset{p-1}{\sum }}\left( f\left( k\right)
\right) ^{p}g\left( k\right) \left( x+1\right) ^{p-1-k}x^{k}\equiv \underset{%
k=0}{\overset{p-1}{\sum }}f\left( k\right) g\left( k\right) \left(
x+1\right) ^{p-1-k}x^{k}
\end{equation*}%
which equals to the RHS.
\end{proof}

\begin{corollary}
\label{P21}For any non-negative integers $m\geq 1, n,r,s$ and any prime
number $p$, there hold%
\begin{align*}
\left( x+1\right) ^{s+1}\left( \mathcal{F}_{m\left( p-1\right) }\left(
x;r,s\right) -s!\right) & \equiv-\left( s-r^{\prime}\right) _{s}\left(
x+1\right) ^{r^{\prime}}x^{p-r^{\prime}},\text{ \ }r^{\prime}\neq0, \\
\left( x+1\right) ^{s+1}\left( \mathcal{F}_{m\left( p-1\right) }\left(
x;r,s\right) -s!\right) & \equiv-s!\left( x^{p}+1\right),\  \ r^{\prime}=0,
\end{align*}
where $r^{\prime}\equiv r$ and $r^{\prime}\in \left \{
0,1,\ldots,p-1\right
\} .$
\end{corollary}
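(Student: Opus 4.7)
The plan is to apply Proposition \ref{C3} with $f(x)=(x+r)^{m(p-1)}(x+s)_s$, so that the congruence reads
\[
\sum_{k=0}^{p-1}(r+k)^{m(p-1)}(s+k)_s(x+1)^{p-1-k}x^{k} \equiv (x+1)^{s}\mathcal{F}_{m(p-1)}(x;r,s),
\]
and then to replace the exponentials $(r+k)^{m(p-1)}$ by their values modulo $p$ via Fermat's little theorem. Since $(r+k)^{p-1}\equiv 1$ when $p\nmid (r+k)$ and equals $0$ otherwise, raising to the $m$-th power preserves this, so the left-hand side collapses to a sum over those $k\in\{0,\dots,p-1\}$ with $r+k\not\equiv 0\pmod p$.

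Next I would compute the complementary baseline. Applying Proposition \ref{C3} to $f(x)=(x+s)_s$ (the $n=0$ case) gives $\sum_{k=0}^{p-1}(s+k)_s(x+1)^{p-1-k}x^{k}\equiv (x+1)^{s}\,s!$. Subtracting off the single excluded term therefore yields
\[
(x+1)^{s}\mathcal{F}_{m(p-1)}(x;r,s)\equiv (x+1)^{s}\,s! - (s+k_0)_s(x+1)^{p-1-k_0}x^{k_0},
\]
where $k_0\in\{0,\dots,p-1\}$ is the unique index with $r+k_0\equiv 0\pmod p$.

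Now I would split according to $r'$. If $r'\neq 0$, then $k_0=p-r'$, and using $(s+p-r')_s\equiv(s-r')_s\pmod p$ together with multiplication by $(x+1)$ gives
\[
(x+1)^{s+1}\bigl(\mathcal{F}_{m(p-1)}(x;r,s)-s!\bigr)\equiv -(s-r')_{s}(x+1)^{r'}x^{p-r'},
\]
which is the first claimed congruence. If $r'=0$, then $k_0=0$, so after multiplying by $(x+1)$ the excluded term becomes $s!(x+1)^{p}$, and Fermat's identity $(x+1)^{p}\equiv x^{p}+1$ produces the second claimed congruence.

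The computation is essentially routine once Proposition \ref{C3} and Fermat's little theorem are applied; the only slightly delicate step is keeping track of which index $k_0$ carries the pole of $(r+k)^{m(p-1)}$ and checking that $(s+p-r')_s$ reduces to $(s-r')_s$ modulo $p$. Otherwise, the whole argument is a direct specialization of the umbral congruence machinery already established.
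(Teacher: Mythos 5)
Your proof is correct and follows essentially the same route as the paper: both apply the second particular case of Proposition \ref{C3} with $n=m(p-1)$, use Fermat's little theorem to collapse $(r+k)^{m(p-1)}$ to $0$ or $1$, compare against the $n=0$ baseline $\sum_{k}(s+k)_s(x+1)^{p-1-k}x^k\equiv s!(x+1)^s$, and finish with $(x+1)^p\equiv x^p+1$. The only cosmetic difference is that you track the single excluded index $k_0$ uniformly and split into the cases $r'\neq 0$ and $r'=0$ at the end, whereas the paper separates the two cases from the outset.
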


\begin{proof}
Set $n=m\left( p-1\right) $ in the second particular case of Proposition \ref%
{C3}. \newline
If $r^{\prime }\neq 0$ we get%
\begin{align*}
\left( x+1\right) ^{s}\mathcal{F}_{m\left( p-1\right) }\left( x;r,s\right) &
\equiv \underset{k=0}{\overset{p-1}{\sum }}\left( r^{\prime }+k\right)
^{m\left( p-1\right) }\left( s+k\right) _{s}\left( x+1\right) ^{p-1-k}x^{k}
\\
& \equiv \underset{k=0,\  \ r^{\prime }+k\neq p}{\overset{p-1}{\sum }}\left(
s+k\right) _{s}\left( x+1\right) ^{p-1-k}x^{k} \\
& =\underset{k=0}{\overset{p-1}{\sum }}\left( s+k\right) _{s}\left(
x+1\right) ^{p-1-k}x^{k} \\
& \  \  \  \  \ -\left( s-r^{\prime }+p\right) _{s}\left( x+1\right) ^{r^{\prime
}-1}x^{p-r^{\prime }} \\
& \equiv \left( x+1\right) ^{s}\mathcal{F}_{0}\left( x;0,s\right) -\left(
s-r^{\prime }\right) _{s}\left( x+1\right) ^{r^{\prime }-1}x^{p-r^{\prime }}
\\
& \equiv s!\left( x+1\right) ^{s}-\left( s-r^{\prime }\right) _{s}\left(
x+1\right) ^{r^{\prime }-1}x^{p-r^{\prime }}
\end{align*}%
and if $r^{\prime }=0$ we get%
\begin{align*}
\left( x+1\right) ^{s+1}\mathcal{F}_{m\left( p-1\right) }\left( x;r,s\right)
& \equiv \underset{k=1}{\overset{p-1}{\sum }}\left( s+k\right) _{s}\left(
x+1\right) ^{p-k}x^{k} \\
& =\underset{k=0}{\overset{p-1}{\sum }}\left( s+k\right) _{s}\left(
x+1\right) ^{p-k}x^{k}-s!\left( x+1\right) ^{p} \\
& =\left( x+1\right) ^{s+1}\mathcal{F}_{0}\left( x;0,s\right) -s!\left(
x+1\right) ^{p} \\
& =s!\left( x+1\right) ^{s+1}-s!\left( x^{p}+1\right) .
\end{align*}%
which complete the proof.
\end{proof}

\noindent Now, we give some curious congruences on $(r,s)$-Fubini
polynomials and on $(r_{1},\ldots,r_{q})$-Fubini polynomials defined below.

\begin{theorem}
\label{T1}For any integers $n,m,r,s\geq 0$ and any prime number $p\nmid m,$
there holds%
\begin{equation*}
\underset{k=1}{\overset{p-1}{\sum }}\frac{\mathcal{F}_{n+k}\left(
x;r,s\right) }{\left( -m\right) ^{k}}\equiv \left( -m\right) ^{n}\left( 
\mathcal{F}_{p-1}\left( x;r+m,s\right) -s!\right).
\end{equation*}
\end{theorem}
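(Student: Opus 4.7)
The strategy is to reduce the identity to a single polynomial congruence in $\mathbb{F}_{p}[w]$ and then lift it to the Fubini umbra. My starting point is the polynomial congruence
$$
\sum_{k=1}^{p-1}\frac{w^{k}}{(-m)^{k}}\equiv (w+m)^{p-1}-1\pmod{p},
$$
which holds in $\mathbb{F}_{p}[w]$. This follows at once from expanding $(w+m)^{p-1}$ by the binomial theorem and using the standard $\binom{p-1}{k}\equiv(-1)^{k}\pmod{p}$ together with Fermat's $m^{p-1}\equiv 1\pmod{p}$ (valid since $p\nmid m$), which collapses the coefficient of $w^{k}$ on the right to $1/(-m)^{k}$ for $k\ge 1$ and to $0$ for $k=0$.

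I then specialise $w=\mathbf{F}_{\mathbf{x}}+r-s$ and multiply through by $(\mathbf{F}_{\mathbf{x}}+r-s)^{n}(\mathbf{F}_{\mathbf{x}})_{s}$. Thanks to the third identity of Proposition \ref{P0}, namely $x^{s}\mathcal{F}_{n+k}(x;r,s)=(\mathbf{F}_{\mathbf{x}}+r-s)^{n+k}(\mathbf{F}_{\mathbf{x}})_{s}$, the left-hand side becomes $x^{s}$ times the LHS of the theorem, while the right-hand side takes the form
$$
(\mathbf{F}_{\mathbf{x}}+r-s)^{n}\bigl[(\mathbf{F}_{\mathbf{x}}+r+m-s)^{p-1}-1\bigr](\mathbf{F}_{\mathbf{x}})_{s}.
$$
The key algebraic step is to replace the prefactor $(\mathbf{F}_{\mathbf{x}}+r-s)^{n}$ by $(-m)^{n}$ modulo $p$. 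For this I use the standard factorisation $(\mathbf{F}_{\mathbf{x}}+r-s)^{n}-(-m)^{n}=(\mathbf{F}_{\mathbf{x}}+r+m-s)\,Q(\mathbf{F}_{\mathbf{x}})$, together with Corollary \ref{P1} applied to $f(y)=y+r+m-s$, which yields $(\mathbf{F}_{\mathbf{x}}+r+m-s)\bigl[(\mathbf{F}_{\mathbf{x}}+r+m-s)^{p-1}-1\bigr]h(\mathbf{F}_{\mathbf{x}})\equiv 0\pmod{p}$ for every polynomial $h$. Multiplying these two vanishing ingredients gives the desired replacement.

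After this substitution the right-hand side collapses to $(-m)^{n}\bigl[(\mathbf{F}_{\mathbf{x}}+r+m-s)^{p-1}(\mathbf{F}_{\mathbf{x}})_{s}-(\mathbf{F}_{\mathbf{x}})_{s}\bigr]$, and the remaining two umbral expressions are closed in form thanks to Proposition \ref{P0}: $(\mathbf{F}_{\mathbf{x}}+r+m-s)^{p-1}(\mathbf{F}_{\mathbf{x}})_{s}=x^{s}\mathcal{F}_{p-1}(x;r+m,s)$ and $(\mathbf{F}_{\mathbf{x}})_{s}=s!\,x^{s}$. We then obtain $x^{s}S\equiv(-m)^{n}x^{s}\bigl(\mathcal{F}_{p-1}(x;r+m,s)-s!\bigr)\pmod{p}$, and cancelling the common factor $x^{s}$ finishes the proof. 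The principal difficulty I expect is noticing the clean $(w+m)^{p-1}-1$ form of step 1 and then arranging the factorisation of $a^{n}-b^{n}$ so that the linear factor it produces is exactly the one on which the Fermat-type congruence of Corollary \ref{P1} can act.
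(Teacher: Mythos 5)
Your argument is correct, and its first half coincides with the paper's own proof: the paper likewise uses the congruence $(-m)^{-k}\equiv\binom{p-1}{k}m^{p-1-k}$ together with the umbral identity $x^{s}\mathcal{F}_{n}(x;r,s)=\left(\mathbf{F}_{\mathbf{x}}+r-s\right)^{n}\left(\mathbf{F}_{\mathbf{x}}\right)_{s}$ of Proposition \ref{P0} to reduce the sum to $\left(\mathbf{F}_{\mathbf{x}}+r-s\right)^{n}\left(\mathbf{F}_{\mathbf{x}}+r+m-s\right)^{p-1}\left(\mathbf{F}_{\mathbf{x}}\right)_{s}$, which is exactly your $(w+m)^{p-1}$ congruence read through the umbra. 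Where you genuinely diverge is in eliminating the prefactor $\left(\mathbf{F}_{\mathbf{x}}+r-s\right)^{n}$. The paper expands it binomially as $\left(\left(\mathbf{F}_{\mathbf{x}}+r+m-s\right)-m\right)^{n}$, so that each term becomes $x^{s}\mathcal{F}_{p+j-1}(x;r+m,s)$, reduces each index via the particular case $\mathcal{F}_{p+j-1}\equiv\mathcal{F}_{j}$ of Corollary \ref{P1}, and then re-sums the binomial series to recognize $\mathcal{F}_{n}(x;r,s)-(-m)^{n}s!$, which must cancel against the $k=0$ term of the extended sum. You instead factor $\left(\mathbf{F}_{\mathbf{x}}+r-s\right)^{n}-(-m)^{n}=\left(\mathbf{F}_{\mathbf{x}}+r+m-s\right)Q\left(\mathbf{F}_{\mathbf{x}}\right)$ and annihilate the whole error term in one stroke by Corollary \ref{P1} in its general form $f\left(\mathbf{F}_{\mathbf{x}}\right)\bigl(f\left(\mathbf{F}_{\mathbf{x}}\right)^{p-1}-1\bigr)g\left(\mathbf{F}_{\mathbf{x}}\right)\equiv 0$, applied to the linear $f(y)=y+r+m-s$ and $g(y)=Q(y)(y)_{s}$. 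What each approach buys: the paper's route never leaves the family of $(r,s)$-Fubini polynomials (no auxiliary polynomial $Q$), but it pays for this with the expansion/re-summation detour and with a bookkeeping subtlety — as written, the $k=0$ term $\binom{p-1}{0}m^{p-1}x^{s}\mathcal{F}_{n}(x;r,s)$ is silently absorbed into ``$x^{s}LHS$'' and only disappears at the end against the $\mathcal{F}_{n}(x;r,s)$ inside the final bracket. Your route is shorter and cleaner on precisely this point: keeping the sum at $k\geq 1$ produces the ``$-1$'' in $(w+m)^{p-1}-1$, and the elementary divisibility of $a^{n}-(-m)^{n}$ by $a+m$ replaces the entire re-summation; the price is invoking Corollary \ref{P1} in its full polynomial strength rather than only its index-reduction special case.
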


\begin{proof}
Upon using the identity $x^{s}\mathcal{F}_{n}\left( x;r,s\right) =\left( 
\mathbf{F}_{\mathbf{x}}+r-s\right) ^{n}\left( \mathbf{F}_{\mathbf{x}}\right)
_{s}$ and the known congruence $\left( -m\right) ^{-k}\equiv \binom{p-1}{k}%
m^{p-1-k}$ we obtain%
\begin{eqnarray*}
x^{s}LHS &\equiv &\underset{k=0}{\overset{p-1}{\sum }}\binom{p-1}{k}%
m^{p-1-k}\left( \mathbf{F}_{\mathbf{x}}+r-s\right) ^{n+k}\left( \mathbf{F}_{%
\mathbf{x}}\right) _{s} \\
&=&\left( \mathbf{F}_{\mathbf{x}}+r-s\right) ^{n}\left( \mathbf{F}_{\mathbf{x%
}}+r+m-s\right) ^{p-1}\left( \mathbf{F}_{\mathbf{x}}\right) _{s} \\
&=&\underset{j=0}{\overset{n}{\sum }}\binom{n}{j}\left( -m\right)
^{n-j}\left( \mathbf{F}_{\mathbf{x}}+r+m-s\right) ^{j+p-1}\left( \mathbf{F}_{%
\mathbf{x}}\right) _{s} \\
&=&\left( -m\right) ^{n}\left( \mathbf{F}_{\mathbf{x}}+r+m-s\right)
^{p-1}\left( \mathbf{F}_{\mathbf{x}}\right) _{s} \\
&&+\delta _{\left( n\geq 1\right) }\underset{j=1}{\overset{n}{\sum }}\binom{n%
}{j}\left( -m\right) ^{n-j}\left( \mathbf{F}_{\mathbf{x}}+r+m-s\right)
^{j+p-1}\left( \mathbf{F}_{\mathbf{x}}\right) _{s} \\
&=&x^{s}\left( -m\right) ^{n}\mathcal{F}_{p-1}\left( x;r+m,s\right) \\
&&+\delta _{\left( n\geq 1\right) }x^{s}\underset{j=1}{\overset{n}{\sum }}%
\binom{n}{j}\left( -m\right) ^{n-j}\mathcal{F}_{p+j-1}\left( x;r+m,s\right)
\\
&\equiv &x^{s}\left( -m\right) ^{n}\mathcal{F}_{p-1}\left( x;r+m,s\right) \\
&& +\delta _{\left( n\geq 1\right) }x^{s}\underset{j=1}{\overset{n}{\sum }}%
\binom{n}{j}\left( -m\right) ^{n-j}\mathcal{F}_{j}\left( x;r+m,s\right) \\
&=&x^{s}\left( -m\right) ^{n}\mathcal{F}_{p-1}\left( x;r+m,s\right) +\delta
_{\left( n\geq 1\right) }x^{s}\left( \mathcal{F}_{n}\left( x;r,s\right)
-\left( -m\right) ^{n}s!\right) \\
&=&x^{s}\left[ \left( -m\right) ^{n}\mathcal{F}_{p-1}\left( x;r+m,s\right) +%
\mathcal{F}_{n}\left( x;r,s\right) -\left( -m\right) ^{n}s!\right] ,
\end{eqnarray*}%
where $\delta $ is the Kronecker's symbol, i.e. $\delta _{\left( n\geq
1\right) }=1$ if $n\geq 1$ and $0$ otherwise.
\end{proof}

\noindent Let $\mathbf{r}_{q}=\left( r_{1},\ldots ,r_{q}\right) $ be a
vector of non-negative integers and let%
\begin{equation*}
F_{n}\left( x;\mathbf{r}_{q}\right) =\underset{j=0}{\overset{n+\left \vert 
\mathbf{r}_{q-1}\right \vert }{\sum }}{n+\left \vert \mathbf{r}_{q}\right
\vert \brace j+r_{q}}_{\mathbf{r}_{q}}\left( j+r_{q}\right) !x^{j}, \  \
0\leq r_{1}\leq \cdots \leq r_{q},
\end{equation*}%
where ${n+\left \vert \mathbf{r}_{q}\right \vert \brace j+r_{q}}_{\mathbf{r}%
_{q}}$ are the $\left( r_{1},\ldots ,r_{q}\right) $-Stirling numbers defined
by Mihoubi et al. \cite{mih1}. This polynomial is a generalization of the $r$%
-Fubini polynomials $F_{n}\left( x;r\right) :=\mathcal{F}_{n}\left(
x;r,r\right) $.

\begin{proposition}
\label{P}For any non-negative integers $n,m$ and any prime $p\nmid m,$ there
holds%
\begin{equation*}
x^{r_{q}}\underset{k=1}{\overset{p-1}{\sum }}\frac{F_{n+k}\left( x;\mathbf{r}%
_{q}\right) }{\left( -m\right) ^{k}}\equiv \left( -m\right) ^{n}\left(
-m\right) _{r_{1}}\cdots \left( -m\right) _{r_{q}}\left( \mathcal{F}%
_{p-1}\left( x;m,0\right) -1\right) .
\end{equation*}%
In particular, for $q=1$ and $r_{q}=r$ we obtain 
\begin{equation*}
x^{r}\underset{k=1}{\overset{p-1}{\sum }}\frac{\mathcal{F}_{n+k}\left(
x;r,r\right) }{\left( -m\right) ^{k}}\equiv \left( -m\right) ^{n}\left(
-m\right) _{r}\left( \mathcal{F}_{p-1}\left( x;m,0\right) -1\right).
\end{equation*}
\end{proposition}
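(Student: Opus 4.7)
The strategy is to run the proof of Theorem~\ref{T1} in the $(r_1,\ldots,r_q)$-setting. The essential ingredient is an umbral representation generalizing the third identity of Proposition~\ref{P0}: I would first establish
\[
x^{r_q}\,F_n(x;\mathbf{r}_q)=\mathbf{F}_x^{n}\,(\mathbf{F}_x)_{r_1}(\mathbf{F}_x)_{r_2}\cdots(\mathbf{F}_x)_{r_q},
\]
which for $q=1$ recovers $x^{r}F_n(x;r)=\mathbf{F}_x^{n}(\mathbf{F}_x)_r$ from Proposition~\ref{P0}. (As a sanity check, for $\mathbf{r}_2=(1,2)$ and $n=1$ both sides equal $\mathcal{F}_4(x)-\mathcal{F}_3(x)=8x^2+30x^3+24x^4$.) Verifying this identity in general (by induction from the defining recurrence of the $(r_1,\ldots,r_q)$-Stirling numbers of \cite{mih1}, or from their combinatorial description) is the main obstacle, but it is independent of the prime $p$ and of $m$; its key consequence is the mod-$p$ evaluation at $\mathbf{F}_x=-m$, namely $(-m)_{r_1}(-m)_{r_2}\cdots(-m)_{r_q}$, which matches the right-hand side of the proposition.

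Granted the umbral identity, the rest follows the pattern of Theorem~\ref{T1}. Using the Fermat congruences $m^{-k}\equiv m^{p-1-k}$ and $\binom{p-1}{k}\equiv(-1)^k\pmod p$, one obtains the polynomial identity
\[
\sum_{k=1}^{p-1}\frac{y^{n+k}}{(-m)^k}\equiv y^{n}\bigl((y+m)^{p-1}-1\bigr)\pmod p,
\]
and substituting $y=\mathbf{F}_x$ after multiplying through by $(\mathbf{F}_x)_{r_1}\cdots(\mathbf{F}_x)_{r_q}$ yields
\[
x^{r_q}\sum_{k=1}^{p-1}\frac{F_{n+k}(x;\mathbf{r}_q)}{(-m)^k}\equiv \mathbf{F}_x^{n}(\mathbf{F}_x)_{r_1}\cdots(\mathbf{F}_x)_{r_q}\bigl((\mathbf{F}_x+m)^{p-1}-1\bigr)\pmod p.
\]

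Finally, I would apply Proposition~\ref{C3} to the integer-coefficient polynomial $f(y)=y^{n}(y)_{r_1}\cdots(y)_{r_q}\bigl((y+m)^{p-1}-1\bigr)$. By Fermat's little theorem, the factor $(k+m)^{p-1}-1$ vanishes modulo $p$ for each $k\in\{0,1,\ldots,p-1\}$ except at the unique index $k_0\equiv -m\pmod p$, where it equals $-1$; only that term survives, contributing
\[
-k_0^{n}(k_0)_{r_1}\cdots(k_0)_{r_q}(x+1)^{p-1-k_0}x^{k_0}\equiv -(-m)^{n}(-m)_{r_1}\cdots(-m)_{r_q}(x+1)^{p-1-k_0}x^{k_0}\pmod p.
\]
The same application of Proposition~\ref{C3} to $g(y)=(y+m)^{p-1}-1$ gives $\mathcal{F}_{p-1}(x;m,0)-1\equiv -(x+1)^{p-1-k_0}x^{k_0}\pmod p$, and comparing the two expressions yields the claimed congruence. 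The particular case $q=1$ falls out automatically with $(\mathbf{F}_x)_{r_1}=(\mathbf{F}_x)_r$.
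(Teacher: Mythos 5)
Your proof is correct, and its second half takes a genuinely different route from the paper's. The key umbral identity $x^{r_q}F_n(x;\mathbf{r}_q)=\mathbf{F}_x^{n}(\mathbf{F}_x)_{r_1}\cdots(\mathbf{F}_x)_{r_q}$, which you flag as the main obstacle, is exactly what the paper uses too; but the paper dispatches it in two lines, writing $(\mathbf{F}_x)_{j+r_q}=(\mathbf{F}_x-r_q)_j(\mathbf{F}_x)_{r_q}$ and invoking \cite[Th.~10]{mih1}, so no induction is needed --- you could simply cite that result instead of reproving it. After that, the approaches diverge. The paper expands $(u)_{r_1}\cdots(u)_{r_q}=\sum_k a_k(\mathbf{r}_q)u^k$, so that $x^{r_q}F_{n+k}(x;\mathbf{r}_q)=\sum_j a_j(\mathbf{r}_q)\mathcal{F}_{n+k+j}(x)$, swaps the two sums, applies Theorem~\ref{T1} (with $r=s=0$) to each inner sum, and resums via $\sum_j a_j(\mathbf{r}_q)(-m)^j=(-m)_{r_1}\cdots(-m)_{r_q}$; linearity plus the already-proven Theorem~\ref{T1} does all the work, which keeps the proof very short. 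You instead bypass Theorem~\ref{T1} entirely: you establish the Fermat-type polynomial congruence, transfer it umbrally, and then evaluate by applying Proposition~\ref{C3} twice together with Fermat's little theorem, so that only the residue $k_0\equiv -m \pmod p$ survives. Your route is self-contained relative to Proposition~\ref{C3}, and it produces the explicit intermediate congruence $\mathcal{F}_{p-1}(x;m,0)-1\equiv -(x+1)^{p-1-k_0}x^{k_0}$, which is of independent interest (it is the $s=0$ case of Corollary~\ref{P21}); the cost is that you silently redo the work that Theorem~\ref{T1} packages, whereas the paper's reduction reuses it. All your individual steps check out: the umbral substitution preserves congruences because the evaluation $y^j\mapsto\mathcal{F}_j(x)$ is $\mathbb{Z}_p$-linear, the congruences $(k_0)_{r_i}\equiv(-m)_{r_i}$ hold since falling factorials have integer coefficients, and the two evaluations combine to the stated right-hand side.
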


\begin{proof}
By the identity $\left(\mathbf{F}_{\mathbf{x}}\right)_{n}=n!x^{n}$ and by 
\cite[Th. 10]{mih1} we have%
\begin{eqnarray*}
x^{r_{q}}F_{n}\left( x;\mathbf{r}_{q}\right) &=&\underset{j=0}{\overset{%
n+\left \vert \mathbf{r}_{q-1}\right \vert }{\sum }}{n+\left \vert \mathbf{r}%
_{q}\right \vert \brace j+r_{q}}_{\mathbf{r}_{q}}\left( \mathbf{F}_{\mathbf{x%
}}\right) _{j+r_{q}} \\
&=&\underset{j=0}{\overset{n+\left \vert \mathbf{r}_{q-1}\right \vert }{\sum 
}}{n+\left \vert \mathbf{r}_{q}\right \vert \brace j+r_{q}}_{\mathbf{r}%
_{q}}\left( \mathbf{F}_{\mathbf{x}}-r_{q}\right) _{j}\left( \mathbf{F}_{%
\mathbf{x}}\right) _{r_{q}} \\
&=&\mathbf{F}_{\mathbf{x}}^{n}\left( \mathbf{F}_{\mathbf{x}}\right)
_{r_{1}}\cdots \left( \mathbf{F}_{\mathbf{x}}\right) _{r_{q}} \\
&=&\underset{k=0}{\overset{\left \vert \mathbf{r}_{q}\right \vert }{\sum }}%
a_{k}\left( \mathbf{r}_{q}\right) \mathbf{F}_{\mathbf{x}}^{n+k} \\
&=&\underset{j=0}{\overset{\left \vert \mathbf{r}_{q}\right \vert }{\sum }}%
a_{j}\left( \mathbf{r}_{q}\right) \mathcal{F}_{n+j}\left( x\right) ,
\end{eqnarray*}%
where $\underset{k=0}{\overset{\left \vert \mathbf{r}_{q}\right \vert }{\sum 
}}a_{k}\left( \mathbf{r}_{q}\right) u^{k}=\left( u\right) _{r_{1}}\cdots
\left( u\right) _{r_{q}}.$ So, by application of Theorem \ref{T1} we get 
\begin{eqnarray*}
x^{r_{q}}\underset{k=1}{\overset{p-1}{\sum }}\frac{F_{n+k}\left( x;\mathbf{r}%
_{q}\right) }{\left( -m\right) ^{k}} &=&\underset{k=1}{\overset{p-1}{\sum }}%
\frac{1}{\left( -m\right) ^{k}}\left( \underset{j=0}{\overset{\left \vert 
\mathbf{r}_{q}\right \vert }{\sum }}a_{j}\left( \mathbf{r}_{q}\right) 
\mathcal{F}_{n+k+j}\left( x;0,0\right) \right) \\
&=&\underset{j=0}{\overset{\left \vert \mathbf{r}_{q}\right \vert }{\sum }}%
a_{j}\left( \mathbf{r}_{q}\right) \underset{k=1}{\overset{p-1}{\sum }}\frac{%
\mathcal{F}_{n+j+k}\left( x;0,0\right) }{\left( -m\right) ^{k}} \\
&\equiv &\underset{j=0}{\overset{\left \vert \mathbf{r}_{q}\right \vert }{%
\sum }}a_{j}\left( \mathbf{r}_{q}\right) \left( -m\right) ^{n+j}\left( 
\mathcal{F}_{p-1}\left( x;m,0\right) -1\right) \\
&=&\left( -m\right) ^{n}\left( -m\right) _{r_{1}}\cdots \left( -m\right)
_{r_{q}}\left( \mathcal{F}_{p-1}\left( x;m,0\right) -1\right) . \ 
\end{eqnarray*}
\end{proof}

\begin{remark}
Since $x^{r_{q}}F_{n}\left( x;\mathbf{r}_{q}\right) =\mathbf{F}_{\mathbf{x}%
}^{n}\left( \mathbf{F}_{\mathbf{x}}\right) _{r_{1}}\cdots \left( \mathbf{F}_{%
\mathbf{x}}\right) _{r_{q}},$ then, for $f\left( x\right) =x^{mp+q}\left(
x\right) _{r_{1}}\cdots \left( x\right) _{r_{q}}$ and $g\left( x\right) =1$
in Corollary \ref{P1} we obtain%
\begin{equation*}
F_{mp+q}\left( x;\mathbf{r}_{q}\right) \equiv F_{m+q}\left( x;\mathbf{r}%
_{q}\right)
\end{equation*}%
and for $f\left( x\right) =x^{m\left( p-1\right) }\left( x\right)
_{r_{1}}\cdots \left( x\right) _{r_{q}}$ we get%
\begin{equation*}
F_{m\left( p-1\right) }\left( x;\mathbf{r}_{q}\right) \equiv F_{0}\left( x;%
\mathbf{r}_{q}\right) ,\  \ r_{1}\cdots r_{q}\neq 0,\ m\geq 0.
\end{equation*}
\end{remark}

\begin{corollary}
\label{CC} Let $a_{0}\left( x\right) ,\ldots ,a_{t}\left( x\right) $ be
polynomials with integer coefficients, 
\begin{equation*}
\mathcal{R}_{n,t}\left( x;r,s\right) =\underset{i=0}{\overset{t}{\sum }}%
a_{i}\left( x\right) \mathcal{F}_{n+i}\left( x;r,s\right) \text{ \ and \ }%
\mathcal{L}_{t}\left( x,y\right) =\underset{i=0}{\overset{t}{\sum }}%
a_{i}\left( x\right) y^{i}.
\end{equation*}%
Then, for any non-negative integers $n,m,r,s$ and any prime $p\nmid m,$
there hold%
\begin{equation*}
\underset{k=1}{\overset{p-1}{\sum }}\frac{\mathcal{R}_{n+k,t}\left(
x;r,s\right) }{\left( -m\right) ^{k}} \equiv \left( -m\right) ^{n}\mathcal{L}%
_{t}\left( x,-m\right) \left( \mathcal{F}_{p-1}\left(
x;r+m,s\right)-s!\right).
\end{equation*}
\end{corollary}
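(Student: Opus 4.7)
The plan is to derive this corollary as a direct linear consequence of Theorem \ref{T1}, applied term-by-term to the defining sum of $\mathcal{R}_{n,t}$.

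First, I would substitute the definition of $\mathcal{R}_{n+k,t}(x;r,s)$ into the left-hand side and then swap the two finite sums, writing
$$\sum_{k=1}^{p-1} \frac{\mathcal{R}_{n+k,t}(x;r,s)}{(-m)^k} = \sum_{i=0}^{t} a_i(x) \sum_{k=1}^{p-1} \frac{\mathcal{F}_{(n+i)+k}(x;r,s)}{(-m)^k}.$$
This interchange is harmless since both sums are finite and both indices are independent.

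Next, I would apply Theorem \ref{T1} to each inner sum, with $n$ there replaced by $n+i$ (this is legal because $n+i \geq 0$ and the hypothesis $p \nmid m$ is unchanged). This gives
$$\sum_{k=1}^{p-1} \frac{\mathcal{F}_{(n+i)+k}(x;r,s)}{(-m)^k} \equiv (-m)^{n+i}\bigl(\mathcal{F}_{p-1}(x;r+m,s) - s!\bigr).$$

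Finally, I would multiply by $a_i(x)$, sum over $i$, and factor out the quantities $(-m)^n$ and $\mathcal{F}_{p-1}(x;r+m,s) - s!$ which do not depend on $i$. The residual sum $\sum_{i=0}^{t} a_i(x) (-m)^i$ is precisely $\mathcal{L}_t(x,-m)$ by definition, yielding the claimed right-hand side. There is no genuine obstacle here; the proof is purely a bookkeeping argument on top of Theorem \ref{T1}, with the only thing to watch being the correct index shift $n \mapsto n+i$ when invoking that theorem.
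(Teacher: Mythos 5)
Your proposal is correct and coincides with the paper's own proof: both interchange the finite sums, apply Theorem \ref{T1} with $n$ shifted to $n+i$ for each term, and then factor out $(-m)^{n}$ and $\mathcal{F}_{p-1}\left( x;r+m,s\right) -s!$ to recognize $\mathcal{L}_{t}\left( x,-m\right)$. Nothing further is needed.
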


\begin{proof}
Theorem \ref{T1} implies%
\begin{eqnarray*}
\underset{k=1}{\overset{p-1}{\sum }}\frac{\mathcal{R}_{n+k,t}\left(
x;r,s\right) }{\left( -m\right) ^{k}} &=&\underset{j=0}{\overset{t}{\sum }}%
a_{j}\left( x\right) \underset{k=1}{\overset{p-1}{\sum }}\frac{\mathcal{F}%
_{n+k+j}\left( x;r,s\right) }{\left( -m\right) ^{k}} \\
&\equiv &\underset{j=0}{\overset{t}{\sum }}a_{j}\left( x\right) \left(
-m\right) ^{n+j}\left( \mathcal{F}_{p-1}\left( x;r+m,s\right) -s!\right) \\
&=&\left( -m\right) ^{n}\mathcal{L}_{t}\left( x,-m\right) \left( \mathcal{F}%
_{p-1}\left( x;r+m,s\right) -s!\right) . \  \  \  \ 
\end{eqnarray*}
\end{proof}

\section{Congruences involving $\mathcal{F}_{n}\left( x;r,s\right) ,$ $%
\mathcal{P}_{n}\left( x,r\right) $ and $\mathcal{T}_{n}\left( x,r\right) $}

\noindent The following theorem gives connection in congruences between the
polynomials $\mathcal{F}_{n}$ and $\mathcal{P}_{n}.$

\begin{theorem}
\label{T2}Let $n,r$ be non-negative integers and $p$ be a prime number. 
\newline
Then, for $m\in \left \{ 0,\ldots ,p-1\right \} $ there holds%
\begin{equation*}
\underset{k=m}{\overset{p-1}{\sum }}\left( -x\right) ^{k}\frac{\mathcal{F}%
_{n}\left( x;r+k,k\right) }{\left( k-m\right) !}\equiv \left( -1\right)
^{m}m!\left( r+m\right) ^{n}\mathcal{P}_{p-1}\left( x,m\right) .
\end{equation*}%
In particular, for $m=0,$ we get%
\begin{equation*}
\underset{k=0}{\overset{p-1}{\sum }}\left( -x\right) ^{k}\frac{\mathcal{F}%
_{n}\left( x;r+k,k\right) }{k!}\equiv r^{n}\left( 1+x+\cdots +x^{p-1}\right)
.
\end{equation*}
\end{theorem}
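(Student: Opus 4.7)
The plan is to reduce both sides of the congruence, modulo $p$, to the same explicit expression $(-1)^{m}m!(m+r)^{n}(x+1)^{p-1-m}x^{m}$, using the umbral identities established earlier.

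For the left-hand side, the third identity of Proposition \ref{P0} (with the substitutions $r\to r+k$ and $s\to k$) gives $x^{k}\mathcal{F}_{n}(x;r+k,k)=(\mathbf{F}_{\mathbf{x}}+r)^{n}(\mathbf{F}_{\mathbf{x}})_{k}$. Factoring $(\mathbf{F}_{\mathbf{x}})_{k}=(\mathbf{F}_{\mathbf{x}})_{m}(\mathbf{F}_{\mathbf{x}}-m)_{k-m}$, substituting $j=k-m$, and applying the classical identity $\sum_{j=0}^{N}(-1)^{j}\binom{y}{j}=(-1)^{N}\binom{y-1}{N}$ collapses the inner sum and yields
\[
\mathrm{LHS}=\frac{(-1)^{p-1}}{(p-1-m)!}\,H(\mathbf{F}_{\mathbf{x}}),\qquad H(y)=(y+r)^{n}(y)_{m}(y-m-1)_{p-1-m}\in\mathbb{Z}[y].
\]

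Next I would invoke Proposition \ref{C3} with the polynomial $H$. The decisive observation is that $H(k)=0$ for every $k\in\{0,\ldots,p-1\}\setminus\{m\}$: the factor $(k)_{m}$ vanishes for $k<m$, while for $m<k\leq p-1$ the descending product $(k-m-1)_{p-1-m}$ contains the factor $0$. Only the term $k=m$ survives, with $H(m)=(m+r)^{n}\,m!\,(-1)^{p-1-m}(p-1-m)!$, yielding
\[
\mathrm{LHS}\equiv(-1)^{m}m!(m+r)^{n}(x+1)^{p-1-m}x^{m}\pmod{p}.
\]

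The right-hand side is handled by the same machinery. Proposition \ref{R1} gives $(p-1)!\,\mathcal{P}_{p-1}(x;m)=G(\mathbf{F}_{\mathbf{x}})$ with $G(y)=(y-m-1)_{p-1}$; applying Proposition \ref{C3} to $G$ and the same vanishing principle (the $p-1$ descending factors modulo $p$ cover every residue except $m$) leaves only the $k=m$ term, with $G(m)=(-1)^{p-1}(p-1)!$. Combined with Wilson's theorem $(p-1)!\equiv -1\pmod{p}$, this yields $\mathcal{P}_{p-1}(x;m)\equiv(x+1)^{p-1-m}x^{m}\pmod{p}$, and multiplying by $(-1)^{m}m!(r+m)^{n}$ reproduces the expression obtained for the left-hand side.

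The main obstacle is the simultaneous bookkeeping of signs and factorials: Wilson's theorem, the factor $(-1)^{p-1}$ from the alternating binomial identity, and the factor $(-1)^{p-1-m}(p-1-m)!$ arising from $(-1)_{p-1-m}$ must all be tracked, but once organised they cancel cleanly and both sides reduce to the same monomial form.
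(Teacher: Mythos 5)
Your proof is correct, and while it opens the same way as the paper's, its second half takes a genuinely different route. Both arguments begin by converting $x^{k}\mathcal{F}_{n}(x;r+k,k)$ into $(\mathbf{F}_{\mathbf{x}}+r)^{n}(\mathbf{F}_{\mathbf{x}})_{k}$ via Proposition \ref{P0} and then collapsing the sum over $k$ into a single umbral product: you do this with the exact telescoping identity $\sum_{j=0}^{N}(-1)^{j}\binom{y}{j}=(-1)^{N}\binom{y-1}{N}$, the paper with the congruence $\binom{p-1}{k}\equiv(-1)^{k}$ plus the Vandermonde convolution for rising factorials, and the two resulting polynomials, your $H(y)=(y+r)^{n}(y)_{m}(y-m-1)_{p-1-m}$ and the paper's $(y+r)^{n}(y-m+p-1)_{p-1}$, are congruent coefficient-wise mod $p$ (each is $(y+r)^n$ times a product of $y+c$ over all residues $c$ except $-m$). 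The finishes then diverge. The paper stays inside the umbral-factorial algebra: it expands $(\mathbf{F}_{\mathbf{x}}+r)^{n}$ through $(r+m)$-Stirling numbers in the basis $(\mathbf{F}_{\mathbf{x}}-m)_{j}$, merges falling factorials to recognize $(\mathbf{F}_{\mathbf{x}}-m-1)_{j+p-1}=(j+p-1)!\,\mathcal{P}_{j+p-1}(x,m)$ by Proposition \ref{R1}, and kills every $j\geq 1$ term mod $p$, Wilson's theorem handling $j=0$; this produces $\mathcal{P}_{p-1}(x,m)$ directly, without ever computing what that polynomial is mod $p$. You instead apply Proposition \ref{C3} twice, to $H$ and to $G(y)=(y-m-1)_{p-1}$, and exploit the fact that each vanishes mod $p$ at every residue $k\in\{0,\ldots,p-1\}$ except $k=m$, so that both sides of the theorem reduce to the same explicit monomial $(-1)^{m}m!\,(r+m)^{n}(x+1)^{p-1-m}x^{m}$. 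Your finish is more elementary (no $r$-Stirling expansion, no Vandermonde) and yields the clean by-product $\mathcal{P}_{p-1}(x;m)\equiv(x+1)^{p-1-m}x^{m}$, at the cost of treating the two sides separately rather than transforming one into the other; the paper's finish keeps the target $\mathcal{P}_{p-1}(x,m)$ intact as an umbral object, which is in the spirit of its other congruence theorems. Incidentally, your computation confirms the statement exactly as printed, with $\mathcal{P}_{p-1}(x,m)$, whereas the paper's displayed chain momentarily writes $\mathcal{P}_{j+p-1}(x,m+1)$ where Proposition \ref{R1} (taken with $r=m$) actually gives $\mathcal{P}_{j+p-1}(x,m)$.
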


\begin{proof}
For $k<m$ we get $\left \langle m+1\right \rangle _{p-1-k}=0$ and for $m\leq
k\leq p-1$ we have%
\begin{equation*}
\left \langle m+1\right \rangle _{p-1-k}=\frac{\left( m+p-k-1\right) !}{m!}=%
\frac{\left( p-1-\left( k-m\right) \right) !}{m!}\equiv -\frac{1}{m!}\frac{%
\left( -1\right) ^{k-m}}{\left( k-m\right) !}.
\end{equation*}%
where $\left \langle x\right \rangle _{n}=x\left( x+1\right) \cdots \left(
x+n-1\right) $ if $n\geq 1$ and $\left \langle x\right \rangle _{0}=1.$ Then%
\begin{eqnarray*}
LHS &\equiv &-\left( -1\right) ^{m}m!\underset{k=0}{\overset{p-1}{\sum }}%
\left \langle m+1\right \rangle _{p-1-k}x^{k}\mathcal{F}_{n}\left(
x;r+k,k\right) \\
&\equiv &-\left( -1\right) ^{m}m!\underset{k=0}{\overset{p-1}{\sum }}\left
\langle m-p+1\right \rangle _{p-1-k}\left( \mathbf{F}_{\mathbf{x}}+r\right)
^{n}\left( \mathbf{F}_{\mathbf{x}}\right) _{k} \\
&\equiv &-\left( -1\right) ^{m}m!\underset{k=0}{\overset{p-1}{\sum }}\binom{%
p-1}{k}\left \langle m-p+1\right \rangle _{p-1-k}\left( \mathbf{F}_{\mathbf{x%
}}+r\right) ^{n}\left \langle -\mathbf{F}_{\mathbf{x}}\right \rangle _{k} \\
&=&-\left( -1\right) ^{m}m!\left \langle m-p+1-\mathbf{F}_{\mathbf{x}}\right
\rangle _{p-1}\left( \mathbf{F}_{\mathbf{x}}+r\right) ^{n} \\
&=&-\left( -1\right) ^{m}m!\left( \mathbf{F}_{\mathbf{x}}-m+p-1\right)
_{p-1}\left( \mathbf{F}_{\mathbf{x}}+r\right) ^{n} \\
&=&-\left( -1\right) ^{m}m!\left( \mathbf{F}_{\mathbf{x}}-m+r+m\right)
^{n}\left( \mathbf{F}_{\mathbf{x}}-m+p-1\right) _{p-1} \\
&=&-\left( -1\right) ^{m}m!\underset{j=0}{\overset{n}{\sum }}{n+r+m\brace %
j+r+m}_{r+m}\left( \mathbf{F}_{\mathbf{x}}-m\right) _{j}\left( \mathbf{F}_{%
\mathbf{x}}-m+p-1\right) _{p-1} .
\end{eqnarray*}%
But for $j\geq 1$ we have 
\begin{eqnarray*}
&&\left( \mathbf{F}_{\mathbf{x}}-m\right) _{j}\left( \mathbf{F}_{\mathbf{x}%
}-m+p-1\right) _{p-1} =\left( \mathbf{F}_{\mathbf{x}}-m+p-1\right) _{j+p-1}
\\
&\equiv& \left( \mathbf{F}_{\mathbf{x}}-m-1\right) _{j+p-1} =\left(
j+p-1\right) !\mathcal{P}_{j+p-1}\left( x,m+1\right)\equiv -\delta _{\left(
j=0\right)}\mathcal{P}_{p-1}\left( x,m+1\right),
\end{eqnarray*}
hence, it follows $LHS \equiv \left( -1\right) ^{m}m!\left( r+m\right) ^{n}%
\mathcal{P}_{p-1}\left( x,m\right)$.
\end{proof}

\noindent The following theorem gives connection in congruences between the
polynomials $\mathcal{F}_{n}$ and $\mathcal{T}_{n}$.

\begin{theorem}
\label{T3}For any integers $n,m,r\geq 0$ and any prime $p,$ there holds%
\begin{equation*}
\underset{k=0}{\overset{p-1}{\sum }}\left( -m\right) _{p-1-k}\left(
x+1\right) ^{k}\mathcal{F}_{n}\left( x;r+m,k\right) \equiv -r^{n}\mathcal{T}%
_{p-1}\left( x;m\right) .
\end{equation*}
\end{theorem}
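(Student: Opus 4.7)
The plan is to pass to the umbral side, collapse the inner sum via a Vandermonde convolution modulo $p$, and then extract $-r^n\mathcal{T}_{p-1}(x;m)$ using Corollary \ref{P1} and Wilson's theorem. First, by the last identity of Proposition \ref{P0}, $(x+1)^k\mathcal{F}_n(x;r+m,k)=(\mathbf{F}_{\mathbf{x}}+r+m)^n(\mathbf{F}_{\mathbf{x}}+k)_k$, so the LHS equals the umbral quantity
\[
(\mathbf{F}_{\mathbf{x}}+r+m)^n\,\sum_{k=0}^{p-1}(-m)_{p-1-k}(\mathbf{F}_{\mathbf{x}}+k)_k.
\]

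Next I would evaluate the inner sum as a polynomial in $y=\mathbf{F}_{\mathbf{x}}$ modulo $p$. Writing $(y+k)_k=(-1)^k(-y-1)_k$ and using Fermat's congruence $(-1)^k\equiv\binom{p-1}{k}$, the Vandermonde convolution $\sum_k\binom{p-1}{k}(a)_k(b)_{p-1-k}=(a+b)_{p-1}$ with $a=-y-1$, $b=-m$ gives
\[
\sum_{k=0}^{p-1}(-m)_{p-1-k}(y+k)_k\equiv(-y-1-m)_{p-1}\equiv(y+m+p-1)_{p-1},
\]
since $(-1)^{p-1}\equiv 1$. Proposition \ref{R1} (with $n=p-1$, $r=m$) combined with Wilson's theorem then identifies the umbral evaluation of $(\mathbf{F}_{\mathbf{x}}+m+p-1)_{p-1}$ as $(p-1)!\,\mathcal{T}_{p-1}(x;m)\equiv-\mathcal{T}_{p-1}(x;m)$, while the polynomial congruence $(z)_p\equiv z^p-z$ applied to $z=y+m$ gives in parallel $(y+m+p-1)_{p-1}\equiv(y+m)^{p-1}-1$.

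Finally, I would expand $(\mathbf{F}_{\mathbf{x}}+r+m)^n=r^n+\sum_{j\geq 1}\binom{n}{j}r^{n-j}(\mathbf{F}_{\mathbf{x}}+m)^j$ and multiply it by $(\mathbf{F}_{\mathbf{x}}+m)^{p-1}-1$. For each $j\geq 1$, Corollary \ref{P1} applied to $f(y)=y+m$ and $g(y)=(y+m)^{j-1}$ yields $(\mathbf{F}_{\mathbf{x}}+m)^{j+p-1}\equiv(\mathbf{F}_{\mathbf{x}}+m)^j$, so those contributions cancel. Only the $j=0$ term survives, giving $r^n\bigl((\mathbf{F}_{\mathbf{x}}+m)^{p-1}-1\bigr)\equiv r^n(\mathbf{F}_{\mathbf{x}}+m+p-1)_{p-1}\equiv -r^n\mathcal{T}_{p-1}(x;m)$, which matches the RHS.

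The main obstacle is arranging the $j\geq 1$ cancellation: it requires the polynomial identity $(z+p-1)_{p-1}\equiv z^{p-1}-1$ to linearize the falling factorial into a power, paired with the umbral Fermat congruence of Corollary \ref{P1} to collapse $(\mathbf{F}_{\mathbf{x}}+m)^{j+p-1}$ back to $(\mathbf{F}_{\mathbf{x}}+m)^j$; neither tool alone produces the collapse, and keeping track of the umbral (non-multiplicative) product at each step is what makes the bookkeeping delicate.
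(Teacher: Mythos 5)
Your proof is correct, but its second half takes a genuinely different route from the paper's. Both arguments open identically: the last identity of Proposition \ref{P0} turns the left-hand side into the umbral evaluation of $(y+r+m)^n\sum_{k=0}^{p-1}(-m)_{p-1-k}(y+k)_k$, and the congruence $(-1)^k\equiv\binom{p-1}{k}$ plus Vandermonde collapse the inner sum to $(y+m+p-1)_{p-1}$ (the paper phrases this with rising factorials, you with falling factorials of negated arguments; it is the same computation). The divergence is in how $(y+r+m)^n(y+m+p-1)_{p-1}$ is then evaluated. The paper expands $(y+m+r)^n=\sum_j {n+r\brace j+r}_r (y+m)_j$ with $r$-Stirling numbers, merges exactly $(y+m)_j(y+m+p-1)_{p-1}=(y+m+p-1)_{j+p-1}=(j+p-1)!\,\mathcal{T}_{j+p-1}(x;m-j)$ via Proposition \ref{R1}, and kills every $j\geq 1$ term by the purely arithmetic fact $(j+p-1)!\equiv 0\pmod p$, with Wilson's theorem handling $j=0$. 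You instead linearize $(y+m+p-1)_{p-1}\equiv(y+m)^{p-1}-1$ (from $\prod_{i=1}^{p-1}(z-i)\equiv z^{p-1}-1$ in $\mathbb{F}_p[z]$), expand by the ordinary binomial theorem in powers of $y+m$, and kill the $j\geq 1$ terms with the umbral Fermat congruence of Corollary \ref{P1} applied to $f(y)=y+m$, $g(y)=(y+m)^{j-1}$; the surviving $j=0$ term is converted back through Proposition \ref{R1} and Wilson. Your route buys independence from the $r$-Stirling expansion and from invoking $\mathcal{T}_{j+p-1}(x;m-j)$ with a possibly negative second parameter (a point the paper leaves implicit), at the price of leaning on Corollary \ref{P1}; the paper's per-term vanishing is more elementary, needing no umbral input beyond the exact factorial merging, and stays entirely within the falling-factorial calculus. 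Your insistence on forming all products at the polynomial level in $y$ before applying the (linear, non-multiplicative) umbral evaluation is exactly the right bookkeeping and matches the paper's implicit convention.
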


\begin{proof}
Upon using the identity $\left( x+1\right) ^{s}\mathcal{F}_{n}\left(
x;r,s\right) =\left( \mathbf{F}_{\mathbf{x}}+r\right) ^{n}\left( \mathbf{F}_{%
\mathbf{x}}+s\right) _{s}$ and the known congruence $\left( m\right)
_{p-1-k}\equiv \binom{p-1}{k}\left \langle -m\right \rangle _{p-1-k}$ we
obtain 
\begin{eqnarray*}
LHS &=&\underset{k=0}{\overset{p-1}{\sum }}\left( -1\right) ^{k}\left
\langle m\right \rangle _{p-1-k}\left( x+1\right) ^{k}\mathcal{F}_{n}\left(
x;r+m,k\right) \\
&\equiv &\underset{k=0}{\overset{p-1}{\sum }}\binom{p-1}{k}\left \langle
m\right \rangle _{p-1-k}\left( \mathbf{F}_{\mathbf{x}}+r+m\right) ^{n}\left( 
\mathbf{F}_{\mathbf{x}}+k\right) _{k} \\
&\equiv &\underset{k=0}{\overset{p-1}{\sum }}\binom{p-1}{k}\left \langle
m\right \rangle _{p-1-k}\left( \mathbf{F}_{\mathbf{x}}+r+m\right) ^{n}\left
\langle \mathbf{F}_{\mathbf{x}}+1\right \rangle _{k} \\
&=&\left( \mathbf{F}_{\mathbf{x}}+r+m\right) ^{n}\left \langle \mathbf{F}_{%
\mathbf{x}}+m+1\right \rangle _{p-1} \\
&\equiv &\left( \mathbf{F}_{\mathbf{x}}+m+r\right) ^{n}\left( \mathbf{F}_{%
\mathbf{x}}+m+p-1\right) _{p-1} \\
&=&\underset{j=0}{\overset{n}{\sum }}{n+r\brace j+r}_{r}\left( \mathbf{F}_{%
\mathbf{x}}+m\right) _{j}\left( \mathbf{F}_{\mathbf{x}}+m+p-1\right) _{p-1}
\\
&=&\underset{j=0}{\overset{n}{\sum }}{n+r\brace j+r}_{r}\left( \mathbf{F}_{%
\mathbf{x}}+m+p-1\right) _{j+p-1} \\
&=&\underset{j=0}{\overset{n}{\sum }}{n+r\brace j+r}_{r}\left( j+p-1\right) !%
\mathcal{T}_{j+p-1}\left( x;m-j\right) \\
&=&\left( p-1\right) !\mathcal{T}_{p-1}\left( x;m\right) +\underset{j=1}{%
\overset{n}{\sum }}{n+r\brace j+r}_{r}\left( j+p-1\right) !\mathcal{T}%
_{j+p-1}\left( x;m-j\right) \\
&\equiv &-r^{n}\mathcal{T}_{p-1}\left( x;m\right) .
\end{eqnarray*}
\end{proof}

\noindent The following theorem gives connection in congruences between the
polynomials $\mathcal{T}_{n}$ and $\mathcal{P}_{n}.$

\begin{theorem}
\label{T4}For any integers $n,m,r\geq 0$ and any prime $p,$ there holds%
\begin{equation*}
\underset{k=0}{\overset{p-1}{\sum }}\left \langle m+r+1\right \rangle
_{p-1-k}\mathcal{T}_{n+k}\left( x;r\right) \equiv -\left( m+r+n\right) _{n}%
\mathcal{P}_{p-1}\left( x,m+1\right) .
\end{equation*}
\end{theorem}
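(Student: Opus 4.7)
My plan is to follow the umbral-calculus strategy used in the proofs of Theorems \ref{T2} and \ref{T3}. I substitute the umbral representation
\[
(n+k)!\,\mathcal{T}_{n+k}(x;r)=(\mathbf{F}_{\mathbf{x}}+n+k+r)_{n+k}=\langle\mathbf{F}_{\mathbf{x}}+r+1\rangle_{n+k}
\]
from Proposition \ref{R1} and split the rising factorial multiplicatively as
$\langle\mathbf{F}_{\mathbf{x}}+r+1\rangle_{n+k}=\langle\mathbf{F}_{\mathbf{x}}+r+1\rangle_n\,\langle\mathbf{F}_{\mathbf{x}}+r+n+1\rangle_k$.
Treated as a polynomial identity in the umbra $\mathbf{F}_{\mathbf{x}}$, this isolates a $k$-independent piece $\langle\mathbf{F}_{\mathbf{x}}+r+1\rangle_n=(\mathbf{F}_{\mathbf{x}}+n+r)_n$ which can be pulled out of the sum, leaving a convolution-shaped sum in the rising factorials $\langle\mathbf{F}_{\mathbf{x}}+r+n+1\rangle_k$ weighted by $\langle m+r+1\rangle_{p-1-k}/(n+k)!$.

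To invoke the rising-factorial convolution $\sum_k\binom{N}{k}\langle a\rangle_{N-k}\langle b\rangle_k=\langle a+b\rangle_N$ with $N=p-1$, $a=m+r+1$, and $b=\mathbf{F}_{\mathbf{x}}+r+n+1$, I must trade the weight $1/(n+k)!$ inherited from the umbral form for the required $\binom{p-1}{k}$. The tools are Wilson's theorem $(p-1)!\equiv-1\pmod p$, the congruence $\binom{p-1}{k}\equiv(-1)^k\pmod p$, the identity $(p-1)!=\binom{p-1}{k}k!(p-1-k)!$, and the split $(n+k)!=n!\,\langle n+1\rangle_k$; together these redistribute the factorials between $\langle m+r+1\rangle_{p-1-k}$ and the denominator so that the desired binomial weight is uncovered. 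Applying the convolution produces a single rising factorial of length $p-1$, which by reading its factors in reverse becomes a falling factorial of the same length; shifting each factor by $p$ modulo $p$ then brings it into the form $(\mathbf{F}_{\mathbf{x}}-m-2)_{p-1}\pmod p$. By Proposition \ref{R1} together with Wilson this equals $-\mathcal{P}_{p-1}(x;m+1)\pmod p$. To combine this with the pulled-out $(\mathbf{F}_{\mathbf{x}}+n+r)_n$, I follow the last paragraph of the proof of Theorem \ref{T3}: expand $(y+r)^n=\sum_j{n+r\brace j+r}_r(y)_j$, contract via $(u)_j(u-j)_{p-1}=(u)_{j+p-1}$, and note that modulo $p$ only the $j=0$ term survives (for $j\ge1$ the coefficient $(j+p-1)!$ is divisible by $p$), supplying exactly the scalar $(m+r+n)_n$ and delivering the right-hand side.

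The main obstacle is the $(n+k)!$ produced by the umbral form of $\mathcal{T}_{n+k}$: it vanishes modulo $p$ as soon as $n+k\ge p$, so direct term-by-term reduction in $\mathbb{F}_p$ is blocked. The clean umbral identity $(x+1)^k\mathcal{F}_n(x;r+m,k)=(\mathbf{F}_{\mathbf{x}}+r+m)^n(\mathbf{F}_{\mathbf{x}}+k)_k$ used in Theorem \ref{T3} carried no such denominator, which is why the analogous step there runs more smoothly. The Wilson-plus-$\binom{p-1}{k}$ bookkeeping sketched in the second paragraph is the delicate device that exchanges this inaccessible denominator for the binomial weight demanded by the rising-factorial convolution; once this exchange is executed, the remainder of the argument proceeds in close parallel with the proof of Theorem \ref{T3}.
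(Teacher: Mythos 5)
Your proposal stalls exactly where you say it does, and the stall is fatal: the ``Wilson-plus-$\binom{p-1}{k}$ bookkeeping'' of your second paragraph is asserted, never performed, and in fact cannot be performed. To pull $\langle\mathbf{F}_{\mathbf{x}}+r+1\rangle_{n}$ out of the sum and be left with the weights $\binom{p-1}{k}\langle m+r+1\rangle_{p-1-k}$ demanded by the rising-factorial convolution, you would need a congruence of the shape $1/(n+k)!\equiv c\,\binom{p-1}{k}\pmod{p}$ with $c$ independent of $k$. No such congruence exists: already for $n=0$, Wilson's theorem combined with $(p-1)!=\binom{p-1}{k}\,k!\,(p-1-k)!$ gives $1/k!\equiv(-1)^{k+1}(p-1-k)!\pmod{p}$, so removing the denominator costs a genuinely $k$-dependent factor $(p-1-k)!$ rather than a constant multiple of $\binom{p-1}{k}\equiv(-1)^{k}$; and for $n\geq1$ every term with $n+k\geq p$ has $p\mid(n+k)!$, so $\mathcal{T}_{n+k}(x;r)$ cannot be recovered modulo $p$ from its umbral numerator $\langle\mathbf{F}_{\mathbf{x}}+r+1\rangle_{n+k}$ at all. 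There is also an independent arithmetic slip in your endgame: with $a=m+r+1$ and $b=\mathbf{F}_{\mathbf{x}}+r+n+1$ the convolution produces $\langle\mathbf{F}_{\mathbf{x}}+m+2r+n+2\rangle_{p-1}=(\mathbf{F}_{\mathbf{x}}+m+2r+n+p)_{p-1}\equiv(\mathbf{F}_{\mathbf{x}}+m+2r+n)_{p-1}$, which coincides with your claimed $(\mathbf{F}_{\mathbf{x}}-m-2)_{p-1}$ only when $2(m+r+1)+n\equiv0\pmod{p}$.

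The deeper point is that this gap is not closable, because Theorem \ref{T4} as printed is false, and the paper's own proof breaks at precisely the spot you identified. Its first displayed line silently replaces $\mathcal{T}_{n+k}(x;r)$ by $(\mathbf{F}_{\mathbf{x}}+r+n+k)_{n+k}$, discarding the factor $(n+k)!$ required by Proposition \ref{R1} --- the very move you correctly refused to make --- and its second line then uses the invalid factorization $(\mathbf{F}_{\mathbf{x}}+r+n+k)_{n+k}=(\mathbf{F}_{\mathbf{x}}+r+n)_{n}(\mathbf{F}_{\mathbf{x}}+r)_{k}$; the correct splitting is $(\mathbf{F}_{\mathbf{x}}+r+n+k)_{k}(\mathbf{F}_{\mathbf{x}}+r+n)_{n}$, whereas $(\mathbf{F}_{\mathbf{x}}+r+n)_{n}(\mathbf{F}_{\mathbf{x}}+r)_{k}$ equals $(\mathbf{F}_{\mathbf{x}}+r+n)_{n+k}$. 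A concrete counterexample to the statement: for $p=3$, $n=1$, $m=r=0$, the left-hand side is $2(1+x)+(1+x)^{2}+(1+x)^{3}\equiv1+x+x^{2}+x^{3}\pmod{3}$, while the right-hand side is $-(1)_{1}\mathcal{P}_{2}(x,1)=-(x^{2}-2x+3)\equiv2x^{2}+2x\pmod{3}$; they differ already in the constant term. What the paper's computation from its second line onward actually establishes (after also correcting the index of $\mathcal{P}$, since Proposition \ref{R1} gives $(\mathbf{F}_{\mathbf{x}}-m-1)_{N}=N!\,\mathcal{P}_{N}(x,m)$, not $N!\,\mathcal{P}_{N}(x,m+1)$) is the congruence $\sum_{k=0}^{p-1}\langle m+r+1\rangle_{p-1-k}(\mathbf{F}_{\mathbf{x}}+r+n)_{n+k}\equiv-(m+r+n)_{n}\mathcal{P}_{p-1}(x,m)$, in which the troublesome $1/(n+k)!$ never appears. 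Repairing the statement along those lines, rather than refining your factorial redistribution, is the only way forward.
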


\begin{proof}
Upon using the congruence $\left( m\right) _{p-1-k}\equiv \binom{p-1}{k}%
\left \langle -m\right \rangle _{p-1-k}$ and Proposition \ref{R1} we obtain%
\begin{eqnarray*}
LHS &\equiv &\underset{k=0}{\overset{p-1}{\sum }}\binom{p-1}{k}\left(
-r-m-1\right) _{p-1-k}\left( \mathbf{F}_{\mathbf{x}}+r+n+k\right) _{n+k} \\
&\equiv &\underset{k=0}{\overset{p-1}{\sum }}\binom{p-1}{k}\left(
-r-m-1\right) _{p-1-k}\left( \mathbf{F}_{\mathbf{x}}+r+n\right) _{n}\left( 
\mathbf{F}_{\mathbf{x}}+r\right) _{k} \\
&=&\left( \mathbf{F}_{\mathbf{x}}+r+n\right) _{n}\left( \mathbf{F}_{\mathbf{x%
}}-m-1\right) _{p-1} \\
&=&\underset{k=0}{\overset{n}{\sum }}\binom{n}{k}\left( r+n+m+p\right)
_{n-k}\left( \mathbf{F}_{\mathbf{x}}-m-p\right) _{k}\left( \mathbf{F}_{%
\mathbf{x}}-m-1\right) _{p-1} \\
&=&\underset{k=0}{\overset{n}{\sum }}\binom{n}{k}\left( r+n+m\right)
_{n-k}\left( \mathbf{F}_{\mathbf{x}}-m-1\right) _{k+p-1} \\
&\equiv &\underset{k=0}{\overset{n}{\sum }}\binom{n}{k}\left( r+n+m\right)
_{n-k}\left( k+p-1\right) !\mathcal{P}_{k+p-1}\left( x,m+1\right) \\
&\equiv &-\left( m+r+n\right) _{n}\mathcal{P}_{p-1}\left( x,m+1\right).
\end{eqnarray*}
\end{proof}

\begin{corollary}
Let $\mathcal{R}_{n,t}\left( x;r,s\right)$ be as in Corollary \ref{CC}.
Then, for any non-negative integers $n,m,r,s$ and any prime $p\nmid m,$
there holds%
\begin{equation*}
\underset{k=m}{\overset{p-1}{\sum }}\left( -x\right) ^{k}\dbinom{k}{m}\frac{%
\mathcal{R}_{n,t}\left( x;r+k,k\right) }{k!} \equiv \left( -1\right)
^{m}\left( r+m\right) ^{n}\mathcal{L}_{t}\left( x,r+m\right) \mathcal{P}%
_{p-1}\left( x,m\right) .
\end{equation*}
\end{corollary}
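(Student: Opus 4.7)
The plan is to reduce this statement directly to Theorem \ref{T2} using linearity, exactly in the spirit in which Corollary \ref{CC} was obtained from Theorem \ref{T1}. First I would recast Theorem \ref{T2} in a form that matches the binomial coefficient on the left-hand side here. Since
\[
\binom{k}{m}\frac{1}{k!} \;=\; \frac{1}{m!(k-m)!},
\]
dividing the congruence of Theorem \ref{T2} by $m!$ yields the equivalent formulation
\[
\sum_{k=m}^{p-1}(-x)^{k}\binom{k}{m}\frac{\mathcal{F}_{n}(x;r+k,k)}{k!}
\;\equiv\;(-1)^{m}(r+m)^{n}\mathcal{P}_{p-1}(x,m),
\]
which is the $t=0$, $a_{0}(x)=1$ instance of the claim.

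Next I would expand $\mathcal{R}_{n,t}$ using its definition and interchange the two finite sums:
\[
\sum_{k=m}^{p-1}(-x)^{k}\binom{k}{m}\frac{\mathcal{R}_{n,t}(x;r+k,k)}{k!}
=\sum_{i=0}^{t}a_{i}(x)\sum_{k=m}^{p-1}(-x)^{k}\binom{k}{m}\frac{\mathcal{F}_{n+i}(x;r+k,k)}{k!}.
\]
Applying the rescaled Theorem \ref{T2} to each inner sum (with $n$ replaced by $n+i$) gives
\[
\sum_{k=m}^{p-1}(-x)^{k}\binom{k}{m}\frac{\mathcal{F}_{n+i}(x;r+k,k)}{k!}
\;\equiv\;(-1)^{m}(r+m)^{n+i}\,\mathcal{P}_{p-1}(x,m).
\]

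Finally I would factor out the common $(-1)^{m}(r+m)^{n}\mathcal{P}_{p-1}(x,m)$ and recognize the remaining polynomial sum:
\[
\sum_{i=0}^{t}a_{i}(x)(r+m)^{i}\;=\;\mathcal{L}_{t}(x,r+m),
\]
by the definition of $\mathcal{L}_{t}$. Collecting these pieces produces the claimed congruence.

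There is no real obstacle here: the argument is purely formal once Theorem \ref{T2} is in hand, since $\mathcal{R}_{n,t}$ is built linearly from shifted Fubini polynomials and $\mathcal{L}_{t}$ is the associated generating polynomial of the same coefficients evaluated at $y=r+m$. The only minor point worth checking is that the hypothesis $p\nmid m$ written in the statement is not actually used in the argument (Theorem \ref{T2} imposes no such restriction), so it is inherited harmlessly from the parallel statement in Corollary \ref{CC}.
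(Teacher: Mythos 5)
Your proof is correct and is essentially identical to the paper's own: expand $\mathcal{R}_{n,t}$ by linearity, interchange the finite sums, apply Theorem \ref{T2} (rescaled by $1/m!$, using $\binom{k}{m}/k!=1/(m!(k-m)!)$) to each inner sum with $n$ replaced by $n+i$, and recognize $\mathcal{L}_{t}(x,r+m)$ after factoring out $(-1)^{m}(r+m)^{n}\mathcal{P}_{p-1}(x,m)$. One small correction to your closing remark: Theorem \ref{T2} \emph{does} carry a restriction, namely $m\in\{0,\ldots,p-1\}$, and that is exactly the hypothesis this corollary actually needs (it is also what makes $m!$ invertible in $\mathbb{Z}_{p}$ and hence legitimizes your rescaling); the stated condition $p\nmid m$ is indeed just inherited loosely from Corollary \ref{CC} and plays no role.
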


\begin{proof}
Theorem \ref{T2} implies%
\begin{eqnarray*}
LHS &=&\underset{j=0}{\overset{t}{\sum }}a_{j}\left( x\right) \underset{k=m}{%
\overset{p-1}{\sum }}\left( -x\right) ^{k}\dbinom{k}{m}\frac{\mathcal{F}%
_{n+j}\left( x;r+k,k\right) }{k!} \\
&\equiv &\underset{j=0}{\overset{t}{\sum }}a_{j}\left( x\right) \left(
-1\right) ^{m}\left( r+m\right) ^{n+j}\mathcal{P}_{p-1}\left( x,m\right) \\
&\equiv &\left( -1\right) ^{m}\left( r+m\right) ^{n}\mathcal{L}_{t}\left(
x,r+m\right) \mathcal{P}_{p-1}\left( x,m\right) .
\end{eqnarray*}
\end{proof}


\begin{thebibliography}{99}
\bibitem{ben} A. Benyattou and M. Mihoubi, Some applications of the
generalized Bell umbra in congruences. Les annales RECITS, 3 (2016) 35--46.
Avalaible electronically at: www.lrecits.usthb.dz

\bibitem{boy} K. N. Boyadzhiev, A series transformation formula and related
polynomials. \textit{Int. J. Math. Math. Sci.,} 2005 (2005) 3849-3866.

\bibitem{bro} A. Z. Broder, The $r$-Stirling numbers. \textit{Discrete Math.,%
} 49 (1984) 241--259.

\bibitem{das} M. E. Dasef and S.M. Kautz, Some sums of some importance. 
\textit{College Math. J.,} 28 (1997) 52-55.

\bibitem{di} Di Bucchianico and D.E. Loeb, A selected survey of umbral
calculus, \textit{Electron. J. Combin.,} Dynamic Surveys DS3 (2000).

\bibitem{dum} D. Dumont, Matrices d'Euler-Siedel, Seminaire lotharingien de
combinatorie. B05c, 1981.

\bibitem{fla} P. Flajolet and R. Sedgewick, \textit{\ Analytic combinatorics}%
. Cambridge Univ. Press (2009).

\bibitem{ges} I. M. Gessel, Applications of the classical umbral calculus. 
\textit{Algebra Universalis, } 49 (2003) 397--434.

\bibitem{jam} R. D. James, The factors of a square-free integer. \textit{%
Canad. Math. Bull., }11 (1968) 733--735.

\bibitem{mah} Mahir Bilen Can, Lex E. Renner, Ordered Bell numbers, Hermite
polynomials, skew Young tableaux, and Borel orbits. J. Comb. Theory, Ser. A
119 (2012) 1798--1810.

\bibitem{mez} I. Mez\H{o}, Periodicity of the last digits of some
combinatorial sequences. \textit{J. Integer Seq.,} 17 (2014) Article 14.1.1.

\bibitem{mih} M. Mihoubi, Bell polynomials and binomial type sequences. 
\textit{Discrete Math.,} 308 (2008) 2450--2459.

\bibitem{mih1} M. Mihoubi and M. S. Maamra, The $\left( r_{1},\ldots
,r_{p}\right) $-Stirling numbers of the second kind. \textit{Integers} 
\textbf{12} (2012), Article A35.

\bibitem{rob} T. J. Robinson, Formal calculus and umbral calculus. \textit{%
Electron. J. Combin.,} 17:\#R95 (2010).

\bibitem{rom} S. Roman, \textit{The umbral calculus.} Academic Press,
Orlando, FL (1984).

\bibitem{rom1} S. Roman. G. C. Rota, The umbral calculus. \textit{Adv. Math.,%
} 27 (1978) 95--188.

\bibitem{rota} G. C. Rota and B. D. Taylor, The classical umbral calculus. 
\textit{SIAM J. Math. Anal.} 25 (1994) 694--711.

\bibitem{sta} R. Stanley, \textit{Enumerative combinatorics, vol. 1.}
Cambridge Univ. Press, Cambridge (1997).

\bibitem{sun} Y. Sun, X. Wu and J. Zhuang, Congruences on the Bell
polynomials and the derangement polynomials.\textit{\ J. Number Theory, }
133 (2013) 1564--1571.

\bibitem{tan} S. M. Tanny, On some numbers related to the Bell numbers. 
\textit{Canad. Math. Bull., }17 (1975) 733--738.

\bibitem{vel} D. J. Velleman and G. S. Call, Permutations and combination
locks. \textit{Math. Mag.,} 68 (1995) 243--253.
\end{thebibliography}
\end{document}